\documentclass[a4paper]{amsart}

\usepackage[utf8]{inputenc}
\usepackage{mathtools}
\usepackage{amssymb,amsfonts,amsmath}
\usepackage{enumitem}
\usepackage{mathrsfs}
\usepackage{url}
\usepackage{aliascnt}
\usepackage{verbatim}
\usepackage{xcolor}
\usepackage[capitalise,nameinlink,noabbrev]{cleveref}

\numberwithin{equation}{section}
\newtheorem{theorem}{Theorem}[section]
\newaliascnt{corollary}{theorem}
\newtheorem{corollary}[corollary]{Corollary}
\aliascntresetthe{corollary}
\newaliascnt{lemma}{theorem}
\newtheorem{lemma}[lemma]{Lemma}
\aliascntresetthe{lemma}
\newaliascnt{proposition}{theorem}
\newtheorem{proposition}[proposition]{Proposition}
\aliascntresetthe{proposition}
\newtheorem{introtheorem}{Theorem}

\theoremstyle{definition}
\newaliascnt{definition}{theorem}
\newtheorem{definition}[definition]{Definition}
\aliascntresetthe{definition}
\theoremstyle{remark}
\newaliascnt{remark}{theorem}
\newtheorem{remark}[remark]{Remark}
\aliascntresetthe{remark}

\crefname{theorem}{Theorem}{Theorems}
\crefname{introtheorem}{Theorem}{Theorems}
\crefname{corollary}{Corollary}{Corollaries}
\crefname{lemma}{Lemma}{Lemmas}
\crefname{proposition}{Proposition}{Propositions}
\crefname{definition}{Definition}{Definitions}
\crefname{remark}{Remark}{Remarks}
\Crefname{theorem}{Theorem}{Theorems}
\Crefname{introtheorem}{Theorem}{Theorems}
\Crefname{corollary}{Corollary}{Corollaries}
\Crefname{lemma}{Lemma}{Lemmas}
\Crefname{proposition}{Proposition}{Propositions}
\Crefname{definition}{Definition}{Definitions}
\Crefname{remark}{Remark}{Remarks}

\newcommand{\N}{\mathbb{N}}
\newcommand{\Z}{\mathbb{Z}}

\newcommand{\T}{\mathcal{T}}
\newcommand{\OO}{\mathcal{O}}

\newcommand{\abs}[1]{\vert #1\vert}
\newcommand{\defeq}{\mathrel{\mathop{:}}=}
\newcommand\Set[2]{\{\,#1\mid#2\,\}}

\renewcommand{\epsilon}{\varepsilon}

\DeclareMathOperator{\Aut}{Aut}
\DeclareMathOperator{\Cay}{Cay}

\DeclareMathOperator{\St}{St}
\DeclareMathOperator{\Sym}{Sym}

\title[Non-uniform exponential growth]{An acylindrically hyperbolic group of non-uniform exponential growth}
\author{Roman Sauer}
\author{Eduard Schesler}
\address{Karlsruhe Institute of Technology, Englerstr.\ 2, 76131 Karlsruhe, Germany}
\email{roman.sauer@kit.edu}
\email{eduard.schesler@kit.edu}
\begin{document}

\begin{abstract}
We show the existence of an acylindrically hyperbolic group that has non-uniform exponential growth and Kazhdan's property $(T)$. The group is constructed as a common quotient of automorphism groups of free groups of increasing finite rank. 
\end{abstract}

\maketitle

\section{Introduction}

The growth of a finitely generated group records how fast balls in its Cayley
graph expand. For a finitely generated group \(G\) with a finite generating set
\(S\), let \(\gamma_G^S(\ell)\) denote the number of elements of \(G\) that can be
represented by words of length at most \(\ell\) over \(S\cup S^{-1}\).
The limit
\[
  \omega(G,S)\defeq \lim_{\ell\to\infty}\sqrt[\ell]{\gamma_G^S(\ell)}
  =\inf_{\ell\in\N}\sqrt[\ell]{\gamma_G^S(\ell)},
\]
which exists due to the submultiplicativity of \(\gamma_G^S\), is called the \emph{exponential growth rate} of \(G\) with respect to \(S\). The group \(G\) is of \emph{exponential growth} if \(\omega(G,S)>1\) for
some, and hence for every, finite generating set \(S\). Whether \(G\) has
exponential growth is independent of \(S\), but the rate \(\omega(G,S)\) itself is not.
Indeed, by enlarging \(S\) one can make \(\omega(G,S)\) arbitrarily large.
However, it is far more subtle to control how small the rates $\omega(G,S)$ can become as the generating set varies.
This motivates the study of the infimum
\[
\omega(G) := \inf_S \omega(G,S),
\]
taken over all finite generating sets of $G$.
The group \(G\) has \emph{uniform exponential growth} if \(\omega(G)>1\), and
\emph{non-uniform exponential growth} if \(G\) has exponential growth while
\(\omega(G)=1\).
The question whether these two notions differ was raised by Gromov~\cite{Gromov1981} and became highly influential.
For many naturally occurring classes of groups the answer is affirmative.
For non-elementary hyperbolic groups, uniform exponential growth was established by
Koubi~\cite{Koubi1998}. Fujiwara and Sela~\cite{FujiwaraSela23} recently showed that
the infimum defining \(\omega(G)\) is in fact attained for hyperbolic groups, and
more precisely that the set of all growth rates of a hyperbolic group is well-ordered.
For the more general class of relatively hyperbolic groups it was shown by Xie~\cite{Xie07} that these are either virtually cyclic or of uniform exponential growth.
For the even more general class of acylindrically hyperbolic groups it remained an open problem whether these can have non-uniform exponential growth, see e.g.~\cite{AbbottNgSpriano24,LegaspiSteenbock2026}.
A further property that has resisted being combined with non-uniform exponential growth is rigidity in the sense of Kazhdan. In Section~7 of his survey~\cite{delaHarpe2002}, de la Harpe asked whether there exists a group of non-uniform exponential growth that has Kazhdan's property~(T).
The main result of this article provides an affirmative answer to these questions.

\begin{introtheorem}\label{thm: main result}
There is an acylindrically hyperbolic Kazhdan group of non-uniform exponential growth. 
\end{introtheorem}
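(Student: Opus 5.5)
The plan is to follow the strategy indicated in the abstract. I will build a group $G$ that is simultaneously a quotient of $\Aut(F_n)$ for an increasing sequence of ranks $n_1<n_2<\cdots$. Each $\Aut(F_n)$ with $n\ge 5$ has property (T) (Kaluba--Nowak--Ozawa, Kaluba--Kielak--Nowak), and (T) passes to quotients. Hence $G$ is Kazhdan as soon as it is a quotient of a single $\Aut(F_n)$ with $n\geq 5$.

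\textbf{Small generating sets.} Non-uniform growth should come from the fact that $\Aut(F_n)$ has a generating set of bounded size, for instance Nielsen-type generators. In a quotient where the images of consecutive $\Aut(F_{n_k})$ generate, conjugation relations force short products to coincide. Concretely, I aim for generating sets $S_k$ of $G$, namely images of generators of $\Aut(F_{n_k})$, such that $\omega(G,S_k)\to 1$. The natural mechanism is the following: in $\Aut(F_n)$ the Nielsen transvections $\rho_{ij}$ pairwise commute for many index pairs and satisfy Steinberg-type relations. A ball of radius $\ell$ over a generating set consisting of an $n$-cycle permutation together with one transvection therefore grows like a ball in a nilpotent-ish or lamplighter-like group up to scale $\sim n$. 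Together with submultiplicativity, $\omega(G,S)\le \gamma^S(\ell)^{1/\ell}$ for every $\ell$, this gives a bound. I plan to show $\gamma^{S_k}(\ell_k)\le C^{\,o(\ell_k)}$ for suitable $\ell_k\approx n_k$, using that in $G$ the images coincide with those of $\Aut(F_{n_k})$ relations. Since this holds for all $k$, we get $\omega(G)=1$.

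\textbf{Exponential growth and acylindrical hyperbolicity.} Exponential growth follows once $G$ is acylindrically hyperbolic and not virtually cyclic, since such groups contain free subgroups. So the central task is constructing the common quotient while keeping it acylindrically hyperbolic. Here I would use small cancellation over hyperbolic (or acylindrically hyperbolic) groups, via Dahmani--Guirardel--Osin rotating families or Olshanskii-style iterated quotients. The steps are as follows.
\begin{enumerate}
\item Start from a hyperbolic quotient $G_1$ of $\Aut(F_{n_1})$. Hyperbolic quotients exist: $\Aut(F_n)$ is SQ-universal-ish, being acylindrically hyperbolic (Genevois--Horbez), so by DGO it has many non-elementary hyperbolic-relative quotients.
\item Inductively form $G_{k+1}$ as an amalgam-like quotient of $G_k * \Aut(F_{n_{k+1}})$. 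This identifies $\Aut(F_{n_k})\subset \Aut(F_{n_{k+1}})$ appropriately, imposes additional small-cancellation relations making the new generators lie in the image of $G_k$, and preserves hyperbolicity, or acylindrical hyperbolicity with some fixed loxodromic WPD element.
\item Pass to the direct limit $G=\varinjlim G_k$.
\end{enumerate}

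\textbf{Main obstacle.} The hardest point is that direct limits of hyperbolic groups are typically not acylindrically hyperbolic (they tend to be lacunary hyperbolic or torsion monsters). I would try to keep one fixed action on a hyperbolic space, e.g.\ a cone-off or a projection complex, on which a fixed element $g$ remains WPD uniformly across all stages. To do this, I would take each new relation to be very long relative to the previous stage's constants while leaving a fixed quasiconvex free subgroup undisturbed. I would then verify acylindricity in the limit via uniform constants. I also expect that reconciling this with the growth estimate is delicate, since that estimate requires many relations at scale $n_k$.
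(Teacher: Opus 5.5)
\textbf{The growth step fails as proposed.} This step is the core of the theorem. A cyclic permutation $\sigma$ of the basis together with one transvection $\lambda=\lambda_{1,2}$ produces lamplighter behaviour up to scale $n$. Lamplighter groups have \emph{exponential} growth, so this is not a subexponential regime.

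Concretely, suppose $2m\le n$. Then the conjugates of $\lambda$ by $\sigma^{2j}$, $0\le j<m$, are transvections on pairwise disjoint pairs of basis elements, so they commute and generate a free abelian group. Hence the $2^m$ words $\lambda^{\epsilon_1}\sigma^2\lambda^{\epsilon_2}\sigma^2\cdots\sigma^2\lambda^{\epsilon_m}$ with $\epsilon_i\in\{0,1\}$ represent pairwise distinct elements of length at most $3m$. So $\gamma(\ell)^{1/\ell}$ is bounded away from $1$ for all $\ell$ up to order $n$, and your target bound $\gamma^{S_k}(\ell_k)\le C^{o(\ell_k)}$ with $\ell_k\approx n_k$ is false.

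The missing idea is to permute the fibres by a group of subexponential growth whose inverted orbits are sublinear. The paper does this as follows.
\begin{enumerate}
\item The level-$n$ quotients $\mathcal{G}_n$ of Grigorchuk's group act on $Y_n=X_n\times\{1,2,3,4\}$.
\item The Bartholdi--Erschler bounds $\Delta(k)\le Ck^\alpha$ and $N(k)\le\exp(Ck^\alpha)$ are transferred from $\Gamma_\infty$ to $\Gamma_n$ up to radius $2^{\lfloor n/2\rfloor}$, via local convergence of the Schreier graphs.
\item The separation $d_{\Gamma_n}(\rho_n,\eta_n)\ge 2^{n-3}$ makes the different kinds of ``lamps'' commute.
\item Counting the normal form $p_1p_2p_3p_4$ then gives $\gamma^{T_n}(\ell)\le\exp(C\ell^\beta)$ for $\ell\le 2^{n/2}/3$.
\end{enumerate}
This bound is proved entirely inside $\Aut(F_{4\cdot 2^n})$. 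It passes to any quotient $q$ because $\gamma_Q^{q(T_n)}(\ell)\le\gamma^{T_n}(\ell)$. No relations in the quotient are needed to shrink balls, and no compatibility between the different $\Aut(F_{n_k})$ is needed. So your worry about reconciling the growth estimate with the quotient construction does not arise.

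\textbf{The acylindrically hyperbolic common quotient is not established.} Your inductive small-cancellation scheme is only an outline, and you leave its central difficulty open: keeping a WPD element through the limit. Step (1) also relies on the existence of suitable hyperbolic quotients of $\Aut(F_{n_1})$, which you do not establish. As written, this part does not constitute a proof.

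The paper avoids all of this by citing two theorems. Genevois--Horbez shows that $\Aut(F_n)$ is acylindrically hyperbolic for $n\ge 2$. Minasyan--Osin shows that every countable family of countable acylindrically hyperbolic groups has a common finitely generated acylindrically hyperbolic quotient. With these and a correct growth estimate, the rest of your outline matches the paper: property (T) passes to quotients, exponential growth comes from free subgroups, and $\omega(Q)\le\inf_n\omega(Q,U_n)=1$.
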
    

We will construct our example as a common acylindrically hyperbolic quotient of a sequence of automorphism groups of free groups of increasing finite rank. The deduction of~\cref{thm: main result} from~\cref{thm: main estimate} is quite short and presented in~\cref{sec: conclusion}. The following~\cref{thm: main estimate} is the core of the paper. 
\begin{introtheorem}\label{thm: main estimate}
There are constants $C>0$ and $\beta\in (0,1)$ and 
finite generating sets \(T_n\) of \(\Aut(F_{4\cdot 2^n})\) such that
\[
  \gamma^{T_n}_{\Aut(F_{4\cdot 2^n})}(\ell)\le \exp\bigl(C \ell^\beta\bigr)~~\text{for every $0\le \ell\le 2^{n/2}/3$}.
\]
\end{introtheorem}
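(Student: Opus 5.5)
The plan is to realise inside \(\Aut(F_{4\cdot 2^n})\) a finite-depth truncation of the first Grigorchuk group, and to run Grigorchuk's length-contraction argument. That argument gives growth at most \(\exp(C\ell^\beta)\) as long as the recursion never reaches the bottom of the truncated tree, and the hypothesis \(\ell\le 2^{n/2}/3\) is what should guarantee this.

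\textbf{Construction.} Write \(N_n=4\cdot 2^n\). Index a basis of \(F_{N_n}\) as \(x_{v,i}\), where \(v\) runs over the words of length \(n\) in \(\{0,1\}\) and \(i\in\{1,2,3,4\}\). This gives a splitting \(F_{N_n}=F_{N_{n-1}}*F_{N_{n-1}}\) according to the first letter of \(v\). The splitting yields an embedding
\[
\psi_n\colon \bigl(\Aut(F_{N_{n-1}})\times \Aut(F_{N_{n-1}})\bigr)\rtimes \Z/2\to \Aut(F_{N_n}),
\]
where \(\Z/2\) swaps the two halves; call the image of the swap \(a_n\). Define \(b_n,c_n,d_n\) recursively, mimicking Grigorchuk's \(b=(a,c)\), \(c=(a,d)\), \(d=(1,b)\):
\[
b_n=\psi_n(a_{n-1},c_{n-1}),\quad c_n=\psi_n(a_{n-1},d_{n-1}),\quad d_n=\psi_n(1,b_{n-1}).
\]
At level \(0\), \(a_0,b_0,c_0,d_0\in \Aut(F_4)\) are chosen as suitable automorphisms, for instance a permutation of the basis, transvections, and an inversion. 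They must satisfy the Grigorchuk relations \(a^2=b^2=c^2=d^2=bcd=1\) as far as needed, and \(\{a_0,b_0,c_0,d_0\}\) together with the lower-level structure must generate. Set \(T_n=\{a_n,b_n,c_n,d_n\}\), possibly enlarged by a bounded number of further elements supported deep along the ray \(1^n\). Any such extra element is itself defined recursively as \(\psi_n(1,\cdot)\), so the contraction argument is not disturbed.

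\textbf{Generation (first key step).} I would show by induction on \(n\) that \(\langle T_n\rangle=\Aut(F_{N_n})\). The strategy is to produce the full permutation group of the basis together with one Nielsen transvection, since these generate \(\Aut(F_N)\) up to inversions. The action of \(\langle a_n,b_n,c_n,d_n\rangle\) on the \(2^n\) leaves is level-transitive. The level-\(0\) pieces, sitting at the leaves \(1^n\) and \(1^{n-1}0\), should produce transvections and local permutations between the two leaves of a pair. Conjugating these around by the transitive action should then give enough transpositions to generate the full symmetric group, and hence \(\Aut\). I expect this to be the \textbf{main obstacle}, because the relations forced by self-similarity are tight. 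The choice at level \(0\) has to be made so that the elements \(b_0,c_0,d_0\) still satisfy Klein-four-type relations, while producing non-permutational automorphisms after conjugation.

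\textbf{Growth estimate.} For a word \(w\) of length \(\ell\) in \(T_n\), the Grigorchuk relations let us reduce \(w\) so that letters alternate between \(a\) and \(\{b,c,d\}\). After this reduction \(w\) decomposes under \(\psi_n\) into sections \(w_0,w_1\) in \(T_{n-1}\) of lengths at most \((\ell+1)/2\). Iterating three times gives eight sections of total length at most \(\eta\ell+C_0\) with \(\eta<1\), exactly as in Grigorchuk's proof, because sections through \(d\) kill one letter. Counting gives
\[
\gamma_n(\ell)\le K^{\ell}\text{-free bound: }\ \gamma_n(\ell)\le C_1\sum_{\ell_1+\dots+\ell_8\le \eta\ell+C_0}\prod_{j}\gamma_{n-3}(\ell_j).
\]
Inducting on this recursion yields \(\gamma_n(\ell)\le\exp(C\ell^\beta)\) with \(\beta=\log 2/\log(2/\eta)\)-type exponent. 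The recursion must be applied at most about \(\log_{2}\ell\) times, each time descending \(3\) levels, until the section lengths are bounded by a constant; at that point the count is trivially bounded. The constraint \(\ell\le 2^{n/2}/3\) ensures that this number of descents consumes fewer than \(n\) levels, so the nonstandard level-\(0\) generators are never reached and the argument stays uniform in \(n\). The bookkeeping of the additive constants \(C_0\) over roughly \(n/2\) iterations is the only part of this step that needs care; I expect it to be absorbed by the factor \(1/3\) in the hypothesis.
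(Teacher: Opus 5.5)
There is a genuine gap, and it is at the step you flagged as the main obstacle: within your framework \(\langle T_n\rangle\) can never equal \(\Aut(F_{N_n})\), whatever you choose at level \(0\).

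\textbf{Generation fails.} Every generator you allow lies in the image of \(\psi_n\):
\begin{itemize}
\item \(a_n\) is the swap;
\item \(b_n,c_n,d_n\) are \(\psi_n\) of pairs;
\item the extra elements are required to have the form \(\psi_n(1,\cdot)\).
\end{itemize}
But \(\mathrm{im}\,\psi_n\) consists of automorphisms mapping \(F^{(0)}\) onto \(F^{(0)}\) or \(F^{(1)}\). Here \(F^{(0)}\) and \(F^{(1)}\) are the free factors spanned by the \(x_{v,i}\) with \(v\) starting with \(0\), resp.\ \(1\). This is a proper subgroup: it does not contain the transvection \(x_{0^n,1}\mapsto x_{0^n,1}x_{1^n,1}\).

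Unwinding the recursion, \(\langle a_n,b_n,c_n,d_n\rangle\) even lies in \(\Aut(F_4)\wr_{X_n}\Sym(X_n)\). So the level-\(0\) pieces act only inside single fibres and cannot produce transpositions or transvections between different fibres.

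Any fix needs a generator that breaks the top splitting. An example is the paper's \(\tau_n\), which swaps \(u_{(\rho_n,1)}\) and \(u_{(\theta_n,1)}\) with \(\theta_n=\rho_na_n=01^{n-1}\). Such an element has no wreath decomposition, so Grigorchuk's contraction recursion cannot be run on words containing it.

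\textbf{The relations fail.} Independently, the contraction argument needs \(a^2=b^2=c^2=d^2=bcd=1\) to reduce words to alternating form. In your construction these relations at level \(n\) unwind to the same relations among \(a_0,b_0,c_0,d_0\) in \(\Aut(F_4)\). Transvections have infinite order, so with your suggested choice one of \(b_n,c_n,d_n\) has infinite order. Its powers do not reduce, and the section-length bound \((\ell+1)/2\) fails.

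\textbf{What the paper does instead.} It gives up self-similarity for the non-permutational generators and replaces contraction by a lamplighter-type count. It takes \(T_n=S_n\cup M_{\eta_n}(Y_n)\cup\{\tau_n,\nu_n\}\), where \(S_n\) acts purely by permuting fibres. A word of length \(\ell\le r_n/3\) is rewritten as \(p_1p_2p_3p_4\). Here \(p_1\in\mathcal{G}_n\), and \(p_2,p_3,p_4\) are products of \(\mathcal{G}_n\)-conjugates of lamp generators located along inverted orbits at \(\eta_n\) resp.\ \(\rho_n\). The reordering is legitimate because \(d_{\Gamma_n}(\rho_n,\eta_n)\ge 2^{n-3}\) makes the relevant supports disjoint.

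The factors are then counted separately:
\begin{itemize}
\item \(p_1\) is counted by Grigorchuk's growth bound.
\item \(p_2,p_3,p_4\) are counted by the Bartholdi--Erschler bounds on the number and size of inverted orbits, transferred to \(\Gamma_n\) for radii up to \(2^{\lfloor n/2\rfloor}\). This transfer, not the depth of a contraction recursion, is where the hypothesis \(\ell\le 2^{n/2}/3\) comes from.
\item The infinite-order generator \(\nu_n\) is harmless. Its conjugates at distinct points commute, and at a single point they are powers of one transvection, giving only \((2\ell+1)^{C_0\ell^\alpha}\) possibilities.
\end{itemize}
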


\section{Background on Grigorchuk's group \(\mathcal{G}\)}\label{sec:Grigorchuk-background}

In this section
we fix some notation for the first Grigorchuk group, which we denote by \(\mathcal{G}\), and collect the results about \(\mathcal{G}\) that will be used in the remainder of the paper.
We refer to~\cite{BartholdiGrigorchukSunic2003} for a comprehensive introduction to \(\mathcal{G}\) and to branch groups in general.
Let \(\T\) denote the binary rooted tree with vertex set \(\{0,1\}^{\ast}\) in which two vertices \(v,w\) are connected by an edge if and only if either \(v = wx\) or \(w = vx\) for some \(x \in \{0,1\}\).
For each \(n \in \N_0\), the \emph{$n$-th level} of \(\T\) is the set $X_n \defeq \{0,1\}^n$ of vertices at distance \(n\) from the root.
The group of automorphisms of \(\T\) will be denoted by \(\Aut(\T)\).
The root of \(\T\)
is the only vertex of valence \(2\) in \(\T\) and is therefore fixed by \(\Aut(\T)\).
Since moreover every element of \(\Aut(\T)\) preserves the distance between vertices, it follows that \(\Aut(\T)\) preserves each level \(X_n\).
In particular, for every subgroup \(G \leq \Aut(\T)\) we obtain a natural homomorphism from \(G\) to the symmetric group \(\Sym(X_n)\), which we denote by $\pi_n$.
We write \(\sigma_g \in \Sym(\{0,1\})\) to denote the image of \(g\) under $\pi_1$.
Conversely, we identify each permutation \(\sigma \in \Sym(\{0,1\})\) with the automorphism of \(\T\) given by \(xw \mapsto \sigma(x)w\) for all \(x \in \{0,1\}\) and \(w \in \{0,1\}^{\ast}\).
The two automorphisms of \(\T\) that arise in this way are called \emph{rooted}.
Another way to produce automorphisms of \(\T\) is to act on the two subtrees below the root independently: Each pair \((g_0, g_1) \in \Aut(\T) \times \Aut(\T)\) gives rise to the automorphism of \(\T\) given by \(xw \mapsto x\, g_x(w)\) for all \(x \in \{0,1\}\) and \(w \in \{0,1\}^{\ast}\).
Together with the rooted ones, automorphisms of this form can be used to decompose every automorphism of \(\T\) as follows.

\begin{definition}\label{def:wreath-decomposition}
Let \(\alpha \in \Aut(\T)\) and let \(v \in \{0,1\}^{\ast}\) be a vertex.
The \emph{state} of \(\alpha\) at \(v\) is the unique automorphism \(\alpha_v \in \Aut(\T)\) that satisfies
\[
\alpha(vw) = \alpha(v)\, \alpha_v(w)
\]
for every \(w \in \{0,1\}^{\ast}\).
For a vertex \(x \in \{0,1\}\) of the first level, this reads \(\alpha(xw) = \sigma_{\alpha}(x)\, \alpha_x(w)\).
The resulting decomposition \(\alpha = \sigma_{\alpha} \circ (\alpha_0, \alpha_1)\) is called the \emph{wreath decomposition} of \(\alpha\).
\end{definition}

The wreath decomposition endows us with an isomorphism
\[
\Aut(\T) \longrightarrow \Sym(\{0,1\}) \ltimes \bigl(\Aut(\T) \times \Aut(\T)\bigr),
\ \alpha \mapsto \sigma_{\alpha} \cdot (\alpha_0, \alpha_1),
\]
which we will use to identify an element $\alpha \in \Aut(\T)$ with its wreath decomposition $\sigma_{\alpha} \cdot (\alpha_0, \alpha_1)$. 

\begin{definition}\label{def:self-similar}
A subgroup \(G \leq \Aut(\T)\) is called \emph{self-similar} if for every \(g \in G\) and every vertex \(v \in \{0,1\}^{\ast}\) the state \(g_v\) is contained in \(G\).
\end{definition}

A prominent example of a self-similar group is the first Grigorchuk group \(\mathcal{G}\), see e.g.~\cite{BartholdiGrigorchukSunik03}.
It is the subgroup of \(\Aut(\T)\) generated by the four automorphisms \(a, b, c, d\) defined as follows.
The generator \(a\) is the rooted automorphism corresponding to the non-trivial permutation in \(\Sym(\{0,1\})\), while \(b, c, d\) fix the first level and are given by the wreath decompositions
\[
b = (a, c),
\qquad
c = (a, d),
\qquad
d = (1, b).
\]
A direct verification shows that each of the generators in \(S \defeq \{a, b, c, d\}\) is an involution, so that $\mathcal{G}$ is generated by $S$ as a monoid.
For each \(n \in \N_0\), we write
\[
a_n, b_n, c_n, d_n \in \Sym(X_n)
\]
to denote the images of \(a, b, c, d\) under $\pi_n$ and we set
\[
\mathcal{G}_n \defeq \langle a_n, b_n, c_n, d_n \rangle \leq \Sym(X_n).
\]
The action of \(\mathcal{G}\) on \(\T\) is \emph{spherically transitive}, that is, the induced action of \(\mathcal{G}\) on \(X_n\) is transitive for every \(n \in \N_0\); see e.g.~\cite[Section~1]{BartholdiGrigorchukSunic2003}.
In other words, the (labeled) Schreier graph of the action of \(\mathcal{G}\) on each level \(X_n\) is connected.
Recall that, given a group \(H\) acting on a set \(\Omega\) and a generating set \(S\) of \(H\), the associated \emph{labeled Schreier graph} is the graph with vertex set \(\Omega\) in which, for every \(\omega \in \Omega\) and every \(s \in S\), there is an edge labeled \(s\) from \(\omega\) to \(\omega s\).
For each \(n \in \N_0\), we write \(\Gamma_n\) for the labeled Schreier graph of the action of \(\mathcal{G}\) on \(X_n\) with respect to \(S\).
For the rest of the paper we fix the two vertices
\[
\rho_n \defeq 1^n
\qquad\text{and}\qquad
\eta_n \defeq 1^{n-1}0
\]
of \(\Gamma_n\).
As the action of \(\mathcal{G}\) on the tree \(\T\) extends continuously to its boundary \(\partial \T = \{0,1\}^{\N} = \mathfrak{C}\), we may also consider the labeled Schreier graph \(\Gamma_{\infty}\) of the orbit of the boundary point \(\rho \defeq 1^{\infty} \in \mathfrak{C}\), with respect to \(S\).
We now relate the Schreier graphs \(\Gamma_n\) and \(\Gamma_{\infty}\).
The key ingredient in doing so is the \emph{contracting property} of $\mathcal{G}$; see e.g.~\cite[Corollary~2.5]{BartholdiGrigorchukSunic2003}.
To formulate it, we write $|g|_S$ for the word length of an element $g \in \mathcal{G}$ with respect to~$S$.

\begin{lemma}\label{lem:nucleus-depth}
For every \(g \in \mathcal{G}\) and every integer
\[
n \geq \lceil \log_2 |g|_S \rceil + 1,
\]
all states \(g_v\) with \(v \in X_n\) lie in the set \(\mathcal{N} = \{1, a, b, c, d\}\).
\end{lemma}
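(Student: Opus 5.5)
We prove the statement by induction on the word length $|g|_S$, using the contracting property of $\mathcal{G}$ in its standard quantitative form: for every $g\in\mathcal{G}$ and $x\in\{0,1\}$ we have
\[
  |g_x|_S \le \tfrac{|g|_S+1}{2}.
\]
We first establish this inequality. Write $g$ as a reduced word in $S$. Since $b,c,d$ pairwise commute and $\{1,b,c,d\}$ is a Klein four-group, a reduced word alternates between $a$ and letters of $\{b,c,d\}$. Hence among $m=|g|_S$ letters at most $\lceil m/2\rceil$ lie in $\{b,c,d\}$. Using $a=\sigma\cdot(1,1)$, $b=(a,c)$, $c=(a,d)$, $d=(1,b)$, each letter from $\{b,c,d\}$ contributes at most one letter to each state $g_x$, while the letters $a$ only permute the coordinates. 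Therefore $|g_x|_S\le \lceil m/2\rceil\le (m+1)/2$.

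Next we iterate this bound. States compose along paths, $g_{vw}=(g_v)_w$, so by induction on $k$ we get, for $v\in X_k$,
\[
  |g_v|_S \le \frac{|g|_S-1}{2^k}+1 .
\]
Indeed, setting $f(m)=(m+1)/2$ gives $f(m)-1=(m-1)/2$, so after $k$ steps the quantity $m-1$ is multiplied by $2^{-k}$. Now take $n\ge \lceil\log_2|g|_S\rceil+1$. Then $2^{n}\ge 2|g|_S$, and hence $|g_v|_S\le 1+\frac{|g|_S-1}{2|g|_S}<3/2$. It follows that $|g_v|_S\le 1$, so $g_v\in\{1,a,b,c,d\}=\mathcal N$. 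The case $g=1$ (where $\log_2 0$ is undefined) is trivial, since all states of $1$ equal $1$. Finally, the states at deeper levels of elements of $\mathcal N$ remain in $\mathcal N$, by the wreath decompositions of $a,b,c,d$, which is consistent with the bound holding for every such $n$.

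The main obstacle is making the one-step bound exact, with the $+1$ rather than a weaker additive constant. This requires the careful reduced-word count: the first and last letters may both lie in $\{b,c,d\}$, which is exactly what produces the ceiling. Alternatively, one could simply cite \cite[Corollary~2.5]{BartholdiGrigorchukSunic2003} for the inequality $|g_x|_S\le(|g|_S+1)/2$.
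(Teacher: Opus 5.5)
Your proof is correct and matches the paper's approach: the paper gives no argument of its own and only invokes the contracting property of $\mathcal{G}$ via \cite[Corollary~2.5]{BartholdiGrigorchukSunic2003}, which is the inequality $|g_x|_S\le (|g|_S+1)/2$ that you derive from the alternating shape of geodesic words and then iterate along $g_{vw}=(g_v)_w$. As a small bonus, your iterated bound $|g_v|_S\le 1+(|g|_S-1)/2^n$ already forces $|g_v|_S\le 1$ once $2^n\ge |g|_S$, so the lemma actually holds for all $n\ge\lceil\log_2|g|_S\rceil$.
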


\noindent The set \(\mathcal{N}\) in~\cref{lem:nucleus-depth} is called the \emph{nucleus} of \(\mathcal{G}\).

\begin{lemma}\label{lem:fixation-transfer}
Let \(g \in \mathcal{G}\) and let \(n \in \N\) be a number that satisfies \(n \geq \lceil \log_2 |g|_S \rceil + 2\).
Then \(g\) fixes the vertex \(\rho_n\) if and only if \(g\) fixes the boundary point \(\rho\).
\end{lemma}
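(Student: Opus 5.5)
Set \(m \defeq \lceil \log_2 |g|_S \rceil + 1\), so that \(n \geq m+1\). One direction is immediate: \(g(\rho) = g(\rho_n)\,g_{\rho_n}(1^\infty)\). Hence if \(g\) fixes \(\rho\), then \(g\) fixes the prefix \(\rho_n\). The content is the converse.

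For the converse, I will factor the action through level \(m\). Write \(\rho_n = 1^m 1^{n-m}\) with \(n-m \geq 1\). If \(g\) fixes \(\rho_n\), then \(g\) fixes \(1^m\), and the state \(h \defeq g_{1^m}\) fixes the vertex \(1^{n-m}\). By \cref{lem:nucleus-depth}, applied at level \(m\), the state \(h\) lies in the nucleus \(\mathcal{N} = \{1,a,b,c,d\}\). Moreover
\[
g(\rho) = g(1^m)\,h(1^\infty) = 1^m\,h(1^\infty).
\]
So it suffices to show that every element \(h \in \mathcal{N}\) that fixes \(1^{k}\) for some \(k \geq 1\) also fixes \(1^\infty\).

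This reduces to a short check on the nucleus. The rooted automorphism \(a\) moves the first letter, so it does not fix \(1^k\) for any \(k\geq 1\); hence \(h \neq a\). The case \(h = 1\) is trivial. For \(b, c, d\), we show by induction on the recursion that each fixes \(1^\infty\). Each of these three elements fixes the first level, and its state at \(1\) is again in \(\{b,c,d\}\): \(b_1 = c\), \(c_1 = d\), \(d_1 = b\). Therefore \(h(1^\infty) = 1\,h_1(1^\infty)\), and iterating shows that \(h\) fixes every prefix \(1^j\), hence fixes \(1^\infty\).

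The one point needing care is the hypothesis bound: we need one extra level beyond \(\lceil \log_2 |g|_S\rceil + 1\). That extra level is what rules out \(h = a\), and it is exactly why \cref{lem:fixation-transfer} assumes \(+2\) rather than \(+1\). Another edge case is \(|g|_S \le 1\), where \(\log_2\) is \(\le 0\) or undefined. This is handled either by the convention in \cref{lem:nucleus-depth} or directly, since then \(g \in \mathcal{N}\) and the nucleus check applies with \(m = 0\).
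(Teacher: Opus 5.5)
Your proof is correct and follows the same route as the paper. Like the paper, you pass to the state $g_{1^m}$ at level $m=\lceil \log_2|g|_S\rceil+1$, place it in the nucleus via \cref{lem:nucleus-depth}, use the extra level to exclude $a$, and conclude because $1,b,c,d$ fix $1^\infty$. Your explicit check via $b_1=c$, $c_1=d$, $d_1=b$ and your remark on the degenerate case $|g|_S\le 1$ only fill in details that the paper leaves implicit.
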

\begin{proof}
If \(g\) fixes \(\rho = 1^{\infty}\), then it fixes each of its prefixes, and in particular \(\rho_n = 1^n\).
Suppose now that \(g(\rho_n) = \rho_n\) for some $n \geq \lceil \log_2 |g|_S \rceil + 2$.
By~\cref{lem:nucleus-depth}, the state \(g_{1^m}\) lies in the nucleus \(\mathcal{N} = \{1, a, b, c, d\}\) for $m = \lceil \log_2 |g|_S \rceil + 1$.
Since \(g\) fixes \(1^n\) and \(m < n\), it follows that \(g_{1^m}\) fixes \(1^{n-m}\).
As \(n - m \geq 1\), we have \(g_{1^m} \neq a\).
Hence $g_{1^m}$ lies in the set $\{1, b, c, d\}$ all of whose elements fix the ray \(\rho = 1^{\infty}\).
This gives us
\[
g(\rho) = g(1^m \rho) = 1^m\, g_{1^m}(\rho) = 1^m \rho = \rho,
\]
which completes the proof.
\end{proof}

Using~\cref{lem:fixation-transfer}, we will deduce that the balls of a certain radius $r_n$ around the points $\rho_n$ and $\eta_n$ in \(\Gamma_n\) coincide with the ball around $\rho$ in \(\Gamma_{\infty}\) of radius $r_n$, where $r_n$ goes to infinity when $n$ goes to infinity.
For a labeled graph \(G\), a vertex \(p\) of \(G\), and \(r \in \N\), we will write \(G(p, r)\) for the labeled ball of radius \(r\) around \(p\) in \(G\).
To prove the claim, we first record a general criterion under which two such balls are isomorphic.

\begin{lemma}\label{lem:stabilizer-ball-criterion}
Let \(H\) be a group with generating set \(S\) that acts on two sets \(\Omega\) and~$\Omega'$.
Let \(G\) and \(G'\) be the associated Schreier graphs in which we consider two points \(\omega \in \Omega\) and \(\omega' \in \Omega'\).
If
\[
\St_H(\omega) \cap B_S(2r) = \St_H(\omega') \cap B_S(2r),
\]
then the map
\[
\varphi \colon G(\omega, r) \rightarrow G'(\omega', r),\ \omega h \mapsto \omega' h
\]
is a well-defined isomorphism of labeled graphs.
\end{lemma}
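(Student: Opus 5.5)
The plan is to check well-definedness and injectivity of \(\varphi\) directly from the hypothesis on stabilizers, then surjectivity, and finally compatibility with labeled edges. Throughout I use the right-action convention of the Schreier graph definition, where an \(s\)-edge runs from \(\omega\) to \(\omega s\). Every vertex of \(G(\omega,r)\) has the form \(\omega h\) with \(h\in B_S(r)\): a path of length at most \(r\) from \(\omega\) reads off a word of length at most \(r\) in \(S\cup S^{-1}\). The same holds for \(G'(\omega',r)\).

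\emph{Well-definedness and injectivity.} Suppose \(h,k\in B_S(r)\). Then \(\omega h=\omega k\) if and only if \(hk^{-1}\in \St_H(\omega)\). Since \(hk^{-1}\in B_S(2r)\), the hypothesis says this holds if and only if \(hk^{-1}\in\St_H(\omega')\), that is, if and only if \(\omega' h=\omega' k\). The forward direction gives well-definedness of \(\varphi\), and the backward direction gives injectivity.

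\emph{Surjectivity and radius.} If \(\omega' h\) lies in \(G'(\omega',r)\), choose \(h\) with \(|h|_S\le r\). Then \(\omega h\) lies in \(G(\omega,r)\) and maps to \(\omega' h\). Moreover, \(\varphi\) preserves distance from the centre for vertices in the ball: the distance equals the minimal length of an \(h\) representing the point, and the previous step shows the sets of such \(h\) coincide on both sides.

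\emph{Labeled edges.} The labeled ball consists of the edges among its vertices. Let \(x=\omega h\) and \(y=\omega k\) with \(h,k\in B_S(r)\), and suppose there is an \(s\)-labeled edge from \(x\) to \(y\), so \(\omega hs=\omega k\). Then \(hsk^{-1}\in\St_H(\omega)\). This element has length at most \(2r+1\), and this is the only delicate point, since the hypothesis only covers \(B_S(2r)\).

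To handle it, I would refine the choice of representatives. If one of \(x,y\) has distance strictly less than \(r\) from the centre, say \(|h|\le r-1\), then \(hs\) has length at most \(r\) and represents \(y\). By well-definedness, \(\varphi(y)=\omega' hs\), so the edge maps to an edge. The case where \(|k|\le r-1\) is symmetric, using \(ks^{-1}\) and \(s\) an involution or \(S\) symmetric. The remaining case is an edge between two vertices both on the sphere of radius exactly \(r\). Here I would either interpret the labeled ball as containing only edges of paths from the centre, in which case such edges never occur, or observe that the argument needs the hypothesis on \(B_S(2r+1)\). Applying the lemma with the stronger hypothesis costs nothing later, since the paper will take \(r_n\) with slack. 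I expect this boundary-edge issue to be the main obstacle. The reverse direction, that edges in \(G'\) pull back to edges in \(G\), follows by the symmetric argument.
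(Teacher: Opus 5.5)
Your argument for the vertices (well-definedness, injectivity, surjectivity) is the paper's argument, and your remark that $\varphi$ preserves the distance to the centre is a correct addition. For the edges, the paper only writes that $\varphi$ sends an edge $(\omega g,\omega gs)$ to $(\omega' g,\omega' gs)$. This tacitly assumes $\varphi(\omega gs)=\omega' gs$, which is justified when $|gs|_S\le r$. It is not justified for an edge (or loop) between two vertices on the sphere of radius $r$, which is exactly the case you isolate.

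Your concern is a genuine defect of the statement, not of your method. If $G(\omega,r)$ is the induced labeled subgraph, the lemma is false. Take $H=\Z$ with $S=\{1\}$, acting by translation on $\Omega=\Z/(2r+1)$ and on $\Omega'=\Z$, with $\omega=\omega'=0$. Both stabilizers meet $B_S(2r)$ only in $\{0\}$. Yet $G(\omega,r)$ is the entire $(2r+1)$-cycle, while $G'(\omega',r)$ is a path with $2r$ edges. The extra edge joins $r$ and $r+1=-r$, both at distance $r$ from $0$.

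So one of your two repairs is needed. Under the first (discarding edges and loops between sphere vertices), your case analysis proves the lemma with the original hypothesis. Under the second, the case distinction even becomes superfluous: for any $s$-edge from $\omega h$ to $\omega k$ with $|h|_S,|k|_S\le r$ one has $hsk^{-1}\in \St_H(\omega)\cap B_S(2r+1)=\St_H(\omega')\cap B_S(2r+1)$, hence $\omega' hs=\omega' k$.

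The strengthening is harmless for the paper. In the proof of \cref{lem:local-convergence}, elements with $|g|_S\le 2r_n+1$ still satisfy $n\ge\lceil\log_2|g|_S\rceil+2$ once $n\ge 7$. Moreover, \cref{prop:BE-finite} only uses the vertex correspondence $\xi_n h\mapsto\rho h$ for $|h|_S\le r_n$, which your first step already gives.

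Finally, your appeal to involutions or a symmetric $S$ in the case $|k|_S\le r-1$ is unnecessary. Lengths are measured over $S\cup S^{-1}$, so $|ks^{-1}|_S\le r$ holds automatically.
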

\begin{proof}
For any \(g, g' \in H\) with \(|g|_S, |g'|_S \leq r\), the element \(g' g^{-1}\) lies in \(B_S(2r)\).
Our hypothesis therefore gives us
\[
\omega g = \omega g'
\iff
g' g^{-1} \in \St_H(\omega)
\iff
g' g^{-1} \in \St_H(\omega')
\iff
\omega' g = \omega' g'.
\]
In particular we see that \(\varphi\) is well-defined and injective.
Since every vertex of \(G'(\omega', r)\) is of the form \(\omega' g\) with \(|g|_S \leq r\) it follows that \(\varphi\) is also surjective.
Moreover \(\varphi\) sends an edge \((\omega g, \omega g s)\) of \(G(\omega, r)\) to the edge \((\omega' g, \omega' g s)\) of \(G'(\omega', r)\), which shows that \(\varphi\) preserves the labels of the edges.
\end{proof}

\begin{lemma}\label{lem:local-convergence}
For every sufficiently large \(n \in \N\) the map
\[
\Gamma_n(\rho_n, 2^{\lfloor n/2 \rfloor}) \rightarrow \Gamma_{\infty}(\rho, 2^{\lfloor n/2 \rfloor}), \ \rho_n h \mapsto \rho h
\]
is an isomorphism of labeled graphs.
\end{lemma}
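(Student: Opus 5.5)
We plan to apply \cref{lem:stabilizer-ball-criterion} with \(H=\mathcal{G}\), generating set \(S\), \(\Omega=X_n\), \(\omega=\rho_n\), \(\Omega'\) the orbit of \(\rho\) in \(\mathfrak{C}\), and \(\omega'=\rho\). With \(r\defeq 2^{\lfloor n/2\rfloor}\), the claim then reduces to the equality
\[
\St_{\mathcal{G}}(\rho_n)\cap B_S(2r)=\St_{\mathcal{G}}(\rho)\cap B_S(2r).
\]
Once this holds, \cref{lem:stabilizer-ball-criterion} gives directly that \(\rho_n h\mapsto \rho h\) is a well-defined isomorphism of labeled balls of radius \(r\).

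To prove the stabilizer equality, we invoke \cref{lem:fixation-transfer} for each \(g\in B_S(2r)\). We must verify \(n\geq \lceil\log_2|g|_S\rceil+2\) for all such \(g\). If \(g=1\), there is nothing to show, since it fixes everything. Otherwise \(1\le|g|_S\le 2r=2^{\lfloor n/2\rfloor+1}\), so
\[
\lceil\log_2|g|_S\rceil+2\le \lfloor n/2\rfloor+3.
\]
It therefore suffices to have \(n\ge \lfloor n/2\rfloor+3\). This holds as soon as \(\lceil n/2\rceil\ge 3\), that is, for \(n\ge 5\). This is what ``sufficiently large'' refers to.

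Then \(g\) fixes \(\rho_n\) if and only if it fixes \(\rho\), which is exactly the required equality of intersections. The only point requiring care is the bookkeeping in the logarithm estimate. The key observation is that the radius \(2^{\lfloor n/2\rfloor}\) is far smaller than the threshold \(2^{n-3}\) allowed by \cref{lem:fixation-transfer}, so there is ample room. No genuine obstacle is expected.
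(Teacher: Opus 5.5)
Your proposal is correct and is essentially the paper's proof: you reduce via \cref{lem:stabilizer-ball-criterion} to the stabilizer equality on $B_S(2r)$ and verify it with \cref{lem:fixation-transfer}, using $\lceil\log_2|g|_S\rceil+2\le\lfloor n/2\rfloor+3\le n$. Your threshold $n\ge 5$ is valid (the paper uses $n\ge 6$), and your separate treatment of $g=1$, where $\log_2|g|_S$ is undefined, is a small point of extra care that the paper leaves implicit.
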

\begin{proof}
Let $r_n = 2^{\lfloor n/2 \rfloor}$.
For \(n \geq 6\) we have
\[
\log_2(2 r_n) + 2 = \lfloor n/2 \rfloor + 3 \leq n.
\]
Let \(g \in \mathcal{G}\) be an element with \(|g|_S \leq 2 r_n\).
By~\cref{lem:stabilizer-ball-criterion}, it suffices to show that \(g\) fixes \(\rho_n\) if and only if it fixes \(\rho\).
Since
\[
n \geq \log_2(2r_n) + 2 \geq \lceil \log_2 |g|_S \rceil + 2,
\]
this is precisely the statement of~\cref{lem:fixation-transfer} applied to \(\rho_n\).
\end{proof}


\begin{corollary}\label{cor:eta-local-convergence}
For every sufficiently large \(n \in \N\) the map
\[
\Gamma_n(\eta_n, 2^{\lfloor n/2 \rfloor}) \rightarrow \Gamma_{\infty}(\rho, 2^{\lfloor n/2 \rfloor}), \ \eta_n h \mapsto \rho h
\]
is an isomorphism of labeled graphs.
\end{corollary}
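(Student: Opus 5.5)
Following the proof of \cref{lem:local-convergence}, we set $r_n = 2^{\lfloor n/2\rfloor}$. By \cref{lem:stabilizer-ball-criterion}, applied to $H=\mathcal{G}$ acting on $X_n$ and on the orbit of $\rho$ in $\mathfrak{C}$, it suffices to show that every $g\in\mathcal{G}$ with $|g|_S\le 2r_n$ fixes $\eta_n$ if and only if it fixes $\rho$. By \cref{lem:local-convergence} (more precisely, by its proof via \cref{lem:fixation-transfer}), such a $g$ fixes $\rho$ exactly when it fixes $\rho_n$. So the whole statement reduces to checking that, for $|g|_S\le 2r_n$ and $n$ large, $g$ fixes $\eta_n=1^{n-1}0$ if and only if it fixes $\rho_n=1^n$.

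We set $m=\lceil\log_2|g|_S\rceil+1$. For $n\ge 6$ the computation in the proof of \cref{lem:local-convergence} gives $m\le n-1$, so $n-m\ge 1$. Both $\eta_n$ and $\rho_n$ begin with the prefix $1^m$ (if $m=n-1$ then $\eta_n=1^m0$). A fixed vertex has its prefix fixed, so each of the two conditions implies $g(1^m)=1^m$. Under this condition,
\[
g(1^m w)=1^m\,g_{1^m}(w),
\]
and \cref{lem:nucleus-depth} gives $h\defeq g_{1^m}\in\mathcal{N}=\{1,a,b,c,d\}$. The question is therefore whether $h$ fixes $w_\eta\defeq 1^{n-m-1}0$ and $w_\rho\defeq 1^{n-m}$, both of which are words of length $k=n-m\ge 1$.

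Now we check this for each element of the nucleus. The rooted generator $a$ fixes neither word, since $k\ge1$. The identity fixes both. For $b,c,d$ the wreath recursion shows that each of them fixes $1^\infty$ and hence $w_\rho$. For $w_\eta$, an element $x\in\{b,c,d\}$ acts on $1^j0$ by following the chain of $1$-states $b\to c\to d\to b$ for $j$ steps. It then reaches some $y\in\{b,c,d\}$, whose state at $0$ is either $a$ or $1$, and that state acts on the final letter $0$. Hence $x$ does \emph{not} fix $w_\eta$ exactly when the last step has $y\in\{b,c\}$. This is the main obstacle: $b$, $c$ and $d$ need not fix $1^{k-1}0$, so the equivalence fails for those $g$ whose state $g_{1^m}$ moves $1^{k-1}0$.

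To get around this I would not rely on the crude estimate for $m$, and would instead choose the depth adaptively. The states $g_{1^j}$ stay in $\mathcal{N}$ for every $j\ge m$. If $g$ fixes $\rho_n$, then for $j\ge m$ the state $g_{1^j}$ is one of $1,b,c,d$, and it cycles through $b\to c\to d\to b$ unless it is $1$. The first genuinely different step is the only place where $\eta_n$ and $\rho_n$ can be distinguished, so I would check that in the relevant range this step cannot move the last letter. Failing that, I would argue via the alternative route: the ball of radius $r_n$ around $\eta_n$ in $\Gamma_n$ is the image of the ball around $\rho_n$ under the edge $\rho_n\, d=\eta_n$? Here $d_{1^{n-1}}$ depends on $n\bmod 3$, and so does the edge labelled by whichever of $b,c,d$ acts as $a$ at the last level. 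I would then use that the Schreier graph $\Gamma_\infty$ has a symmetry fixing the loops at $\rho$, which lets us identify the two balls. Concretely, I expect the cleanest fix to be the following: $\eta_n=\rho_n\,x$ for a single generator $x\in\{b,c,d\}$, so $\Gamma_n(\eta_n,r_n)\subset\Gamma_n(\rho_n,r_n+1)$. I would then compare this ball with $\Gamma_\infty$ near $\rho$, using that $\rho$ has only loops labelled by $b,c,d$, together with the known reflection symmetry $\rho\leftrightarrow$ the endpoint of the linear Schreier graph.
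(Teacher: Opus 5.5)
Your reduction in the first paragraph is fine, but the nucleus case check contains a computational error, and that error is what creates your ``main obstacle''. For $y\in\{b,c,d\}$ the rooted part of the wreath decomposition is trivial, so $y(0w)=0\,y_0(w)$. Applied to the one-letter word $0$ this gives $y(0)=0$. The state $y_0\in\{a,1\}$ acts only on letters \emph{after} the $0$, and $w_\eta=1^{k-1}0$ has none. Hence every $x\in\{b,c,d\}$ fixes $1^{j}0$ for all $j$, just as it fixes $1^{j+1}$. So $h=g_{1^m}$ fixes $w_\eta$ if and only if $h\neq a$, if and only if $h$ fixes $w_\rho$. With this correction your own argument already finishes the proof. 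As written, however, you assert the opposite and never complete the argument. The dependence on $n \bmod 3$ you have in mind is real, but it concerns vertices $1^j0z$ with a further letter $z$ after the $0$; that is where the double edges of $\Gamma_n$ come from.

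The fallback routes you sketch do not work. In particular, $\eta_n=\rho_n x$ for some generator $x\in\{b,c,d\}$ is false: $b,c,d$ all fix $1^\infty$ and hence fix $\rho_n$. Such an edge would also contradict \cref{lem:separation}, which gives $d_{\Gamma_n}(\rho_n,\eta_n)\ge 2^{n-3}$. For the same reason the inclusion $\Gamma_n(\eta_n,r_n)\subset\Gamma_n(\rho_n,r_n+1)$ fails for large $n$.

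The missing observation is much simpler than any nucleus analysis. The vertices $\rho_n=1^n$ and $\eta_n=1^{n-1}0$ are the two children of $1^{n-1}$ in a binary tree. Any automorphism fixing one of them fixes their parent, and since it permutes the parent's two children, it also fixes the other. Thus $\St_{\mathcal{G}}(\rho_n)=\St_{\mathcal{G}}(\eta_n)$ exactly. Then \cref{lem:stabilizer-ball-criterion} gives $\Gamma_n(\eta_n,r_n)\cong\Gamma_n(\rho_n,r_n)$ via $\eta_n h\mapsto\rho_n h$, and composing with \cref{lem:local-convergence} yields the claim. This is the paper's proof.
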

\begin{proof}
Since $\mathcal{G}$ acts on $\T$ by automorphisms, it directly follows that \(\St_{\mathcal{G}}(\rho_n) = \St_{\mathcal{G}}(\eta_n)\).
In particular we have
\[
\St_{\mathcal{G}}(\eta_n) \cap B_S(2r_n) = \St_{\mathcal{G}}(\rho_n) \cap B_S(2r_n)
\]
for $r_n = 2^{\lfloor n/2 \rfloor}$, so~\cref{lem:stabilizer-ball-criterion} yields the isomorphism
\[
\Gamma_n(\eta_n, r_n) \to \Gamma_n(\rho_n, r_n),\ \eta_n h \mapsto \rho_n h.
\]
Composing it with the isomorphism \(\Gamma_n(\rho_n, r_n) \to \Gamma_{\infty}(\rho, r_n)\) of~\cref{lem:local-convergence} gives the claimed isomorphism.
\end{proof}

Next we show that the two vertices \(\rho_n\) and \(\eta_n\) drift apart as \(n\) tends to infinity.
In fact their distance grows exponentially.

\begin{lemma}\label{lem:separation}
The distance between \(\rho_n\) and \(\eta_n\) in \(\Gamma_n\) satisfies
\[
d_{\Gamma_n}(\rho_n, \eta_n) \geq 2^{\,n-3}
\]
for every \(n \geq 4\).
\end{lemma}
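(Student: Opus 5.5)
The plan is to argue by contradiction using the contracting property, in the same spirit as the proof of \cref{lem:fixation-transfer}. Suppose \(d_{\Gamma_n}(\rho_n,\eta_n) < 2^{n-3}\). A path in \(\Gamma_n\) from \(\rho_n\) to \(\eta_n\) of length \(\ell\) reads off a word of length \(\ell\) in \(S\). So there is some \(g \in \mathcal{G}\) with \(|g|_S \le 2^{n-3}-1\) that sends \(\rho_n = 1^n\) to \(\eta_n = 1^{n-1}0\). Since \(\rho_n \neq \eta_n\), we have \(g \neq 1\), so \(|g|_S \ge 1\) and \(\log_2|g|_S\) is defined.

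Next I would set \(m \defeq \lceil \log_2 |g|_S\rceil + 1\). The bound \(|g|_S \le 2^{n-3}-1\) gives \(\lceil \log_2|g|_S\rceil \le n-3\), hence \(m \le n-2\). By \cref{lem:nucleus-depth}, the state \(g_{1^m}\) lies in the nucleus \(\mathcal{N}=\{1,a,b,c,d\}\). Since \(m < n\), the prefix of length \(m\) of both \(1^n\) and \(1^{n-1}0\) is \(1^m\), so \(g(1^m)=1^m\). By the definition of states, \(g(1^m w) = 1^m\, g_{1^m}(w)\), and therefore
\[
g_{1^m}(1^{n-m}) = 1^{n-m-1}0, \qquad \text{where } n-m \ge 2 .
\]

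Finally I would rule out each element of \(\mathcal{N}\). The elements \(1,b,c,d\) fix the ray \(1^\infty\), as already used in the proof of \cref{lem:fixation-transfer}, so they fix \(1^{n-m}\). This contradicts \(1^{n-m} \neq 1^{n-m-1}0\). The element \(a\) changes the first letter, sending \(1^{n-m}\) to a word beginning with \(0\). But since \(n-m\ge 2\), the target \(1^{n-m-1}0\) begins with \(1\), again a contradiction. Hence no such \(g\) exists, and \(d_{\Gamma_n}(\rho_n,\eta_n)\ge 2^{n-3}\).

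The only delicate point is the bookkeeping in the second step. One must ensure that \(m \le n-2\), so that at least two letters remain after the common prefix \(1^m\); this is exactly what excludes the case \(g_{1^m}=a\). The hypothesis \(n \ge 4\) simply guarantees \(2^{n-3}\ge 2\), so the statement is non-vacuous.
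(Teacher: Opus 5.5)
Your proof is correct and follows essentially the same route as the paper. Both arguments choose \(m=\lceil \log_2|g|_S\rceil+1\), apply the nucleus lemma at level \(m\), and use that \(1,b,c,d\) fix \(1^\infty\) while \(a\) moves the first letter; you merely phrase it as a contradiction from \(d_{\Gamma_n}(\rho_n,\eta_n)<2^{n-3}\), whereas the paper rules out \(m\le n-2\) directly and converts \(m\ge n-1\) into the bound.
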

\begin{proof}
Let $n \geq 4$.
Since \(\Gamma_n\) is connected there exists an element \(g \in \mathcal{G}\) of word length \(|g|_S = d_{\Gamma_n}(\rho_n, \eta_n)\) that satisfies \(\rho_n g = \eta_n\).
Since \(1^n\) and \(1^{n-1}0\) share the prefix \(1^{n-1}\), the element \(g\) fixes the vertex \(1^{n-1}\), and its state \(g_{1^{n-1}}\) switches \(0\) and \(1\).
Let \(m \defeq \lceil \log_2 |g|_S \rceil + 1\).
By~\cref{lem:nucleus-depth}, the state \(g_{1^m}\) lies in the nucleus \(\mathcal{N} = \{1,a,b,c,d\}\).
Suppose that \(m \leq n - 2\).
Since \(g\) fixes \(1^{n-1}\), the state \(g_{1^m}\) fixes \(1^{n-1-m}\).
Using \(n-1-m \geq 1\) and the fact that \(a\) does not fix the first level, we obtain \(g_{1^m} \in \{1,b,c,d\}\).
Since each element of \(\{1,b,c,d\}\) fixes \(\rho = 1^{\infty}\) it follows that \(g_{1^m}\) fixes \(1^{n-m}\), which is a contradiction to \(g_{1^{n-1}}(1) = 0\).
Thus we have \(m \geq n-1\), which gives us
\[
\log_2 |g|_S + 1
\geq \lceil \log_2 |g|_S \rceil 
\geq n - 2.
\]
Hence \(\log_2 |g|_S \geq n - 3\), from which we deduce that $d_{\Gamma_n}(\rho_n, \eta_n) = |g|_S \geq 2^{n-3}$.
\end{proof}

We now turn to growth results concerning $\mathcal{G}$ and its Schreier graphs $\Gamma$ and $\Gamma_n$.
Let us start by recalling the following seminal result of Grigorchuk~\cite{Grigorchuk84}.

\begin{theorem}[Grigorchuk]\label{thm:G-subexponential}
The group \(\mathcal{G}\) is of subexponential growth.
More precisely, there exist constants \(C > 0\) and \(\frac{1}{2} < \gamma < 1\) such that
\[
\gamma_{\mathcal{G}}^S(n) \leq \exp(C n^{\gamma})
\]
for every $n \in \N$.
\end{theorem}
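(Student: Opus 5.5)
The statement is Grigorchuk's classical theorem, and I would follow his original strategy: a contraction of word length under passage to the states, combined with a recursive counting argument.

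\textbf{Normal form and first-level word map.} Every element of \(\mathcal{G}\) is represented by a reduced word alternating between \(a\) and letters from \(\{b,c,d\}\). This uses two facts: \(\{1,b,c,d\}\) is a Klein four-group, and each generator is an involution. The subgroup \(H=\St_{\mathcal{G}}(X_1)\) has index \(2\), and the wreath decomposition gives an injective homomorphism
\[
\psi\colon H\to\mathcal{G}\times\mathcal{G}, \qquad h\mapsto (h_0,h_1).
\]
A reduced word \(w\) of length \(\ell\) representing an element of \(H\) contains about \(\ell/2\) letters from \(\{b,c,d\}\). Each such letter, or its conjugate by \(a\), contributes exactly one letter to each coordinate of \(\psi(h)\), using \(b=(a,c)\), \(c=(a,d)\), \(d=(1,b)\). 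This yields \(|h_0|_S+|h_1|_S\le \ell+1\).

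\textbf{Strict contraction.} This bound alone gives no contraction, so I would iterate to level \(3\). The letter \(d\) contributes the trivial element in one coordinate, so occurrences of \(d\) shrink length. Tracking the letters through three levels, with \(d\mapsto(1,b)\), \(b\mapsto(a,c)\), \(c\mapsto(a,d)\), shows that among any three consecutive non-\(a\) letters some \(d\) appears at some level. Hence there is \(\eta<1\) with
\[
\sum_{v\in X_3}|g_v|_S\le \eta\,|g|_S+C_0
\]
for all \(g\in\mathcal{G}\). Grigorchuk obtains \(\eta=3/4\). Making this step precise is the main obstacle, since it requires careful bookkeeping of how \(a\)-conjugation permutes the three letters and of the cancellations that occur after reducing.

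\textbf{Counting.} The map \(g\mapsto \bigl(\pi_3(g),(g_v)_{v\in X_3}\bigr)\) is injective, and \(\pi_3(\mathcal{G})\) is finite. Therefore
\[
\gamma(n)\le K\sum_{n_1+\dots+n_8\le \eta n+C_0}\ \prod_{i=1}^{8}\gamma(n_i),
\]
where \(K=|\pi_3(\mathcal{G})|\). Assume inductively that \(\gamma(m)\le\exp(Cm^{\gamma})\) for \(m<n\). Concavity of \(x\mapsto x^{\gamma}\) gives \(\sum n_i^{\gamma}\le 8^{1-\gamma}(\eta n+C_0)^{\gamma}\). The number of compositions of at most \(\eta n+C_0\) into \(8\) parts is polynomial in \(n\). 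Choosing \(\gamma<1\) with \(8^{1-\gamma}\eta^{\gamma}<1\) creates a gap between \(C n^{\gamma}\) and \(C8^{1-\gamma}\eta^{\gamma}n^{\gamma}\). This gap absorbs the polynomial factor, the constant \(K\), and the additive \(C_0\) once \(C\) is large, and small \(n\) are handled by enlarging \(C\). The resulting \(\gamma\) exceeds \(1/2\) for \(\eta=3/4\): the condition is \(\gamma>\log 8/\log(32/3)\approx 0.88\). The lower bound \(\gamma>1/2\) in the statement is harmless, because increasing \(\gamma\) only weakens the bound.
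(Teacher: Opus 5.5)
Your outline is correct and is the standard proof of the result that the paper simply cites from Grigorchuk; the paper gives no argument of its own. Granting the contraction $\sum_{v\in X_3}|g_v|_S\le\frac34|g|_S+8$, your injection $g\mapsto(\pi_3(g),(g_v)_{v\in X_3})$ and your concavity induction are sound and give any exponent $\gamma\in(\log 8/\log(32/3),1)$. Deferring the contraction lemma to Grigorchuk matches the paper's level of rigour, since the paper defers the whole theorem. Your heuristic for that lemma is loosely phrased, though: it is not about three consecutive letters but about each letter's lineage across three levels. Each letter of $\{b,c,d\}$ passes to $c,d,b$ respectively in one child, so along the lineage $b\mapsto c\mapsto d$ it is a $d$ at one of the levels $0,1,2$ unless absorbed in an earlier merge. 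Each $d$ contributes $1$ to the other child and so forces a merge, i.e.\ a saving, at the next level.
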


While~\cref{thm:G-subexponential} concerns the growth of balls in the Cayley graph \(\Cay(\mathcal{G},S)\), the quantity we are interested in for the Schreier graphs \(\Gamma\) and \(\Gamma_n\) is the inverted orbit growth -- a notion systematically studied by Bartholdi and Erschler~\cite{BartholdiErschler12}.
Following Bartholdi and Erschler, we will from now on work with right actions as this is convenient for the study of inverted orbits.

\begin{definition}\label{def:inverted-orbit}
Let \(H\) be a group acting from the right on a set \(\Omega\), let \(A\) be a generating set of \(H\), and let \(\omega \in \Omega\).
The \emph{inverted orbit} of a word \(w = s_1 s_2 \cdots s_{\ell} \in A^{\ast}\) at \(\omega\) is the set
\[
\OO_{\omega}(w) \defeq \{\omega,\ \omega s_{\ell},\ \omega s_{\ell-1}s_{\ell},\ \ldots,\ \omega s_1 s_2 \cdots s_{\ell}\} \subseteq \Omega.
\]
The cardinality of \(\OO_{\omega}(w)\) will be denoted by \(\delta_{\omega}(w)\).
We further define the \emph{inverted orbit growth function}
\[
\Delta_{\omega}^{A}(n) \defeq \max\Set{\delta_{\omega}(w)}{w \in A^{\ast},\ |w| \leq n},
\]
and we write
\[
N_{\omega}^{A}(n) \defeq \bigl| \Set{\OO_{\omega}(w)}{w \in A^{\ast},\ |w| \leq n} \bigr|
\]
for the number of distinct inverted orbits arising from words of length at most \(n\).
\end{definition}

Bartholdi and Erschler studied the functions $\Delta_{\rho}^{S}$ and $N_{\rho}^{S}$ for the action of \(\mathcal{G}\) on the $\mathcal{G}$-orbit of $\rho$.
In~\cite[Proposition~4.4 and Lemma~4.9]{BartholdiErschler12} they obtained the following upper bounds for these functions.

\begin{theorem}[Bartholdi-Erschler]\label{thm:BE}
There exist constants \(C > 0\) and \(0 < \alpha < 1\) such that
\[
\Delta_{\rho}^{S}(n) \leq C n^{\alpha}
\qquad\text{and}\qquad
N_{\rho}^{S}(n) \leq \exp(C n^{\alpha})
\]
for every \(n \in \N\), where \(\Delta_{\rho}^{S}\) and \(N_{\rho}^{S}\) refer to the action of \(\mathcal{G}\) on the $\mathcal{G}$-orbit of $\rho$.
\end{theorem}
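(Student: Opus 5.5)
The plan is to derive both bounds from a self-similar recursion on section words, using Grigorchuk's length contraction at level~$3$. We first strengthen the statement so that it can be iterated. Let $\widetilde\Delta(n)$ be the maximum of $\delta_\omega(w)$ over all words $w$ of length at most $n$ \emph{and} all base points $\omega$ in the $\mathcal{G}$-orbit of $\rho$. Let $\widetilde N(n)$ be the maximum over such $\omega$ of the number of inverted orbits $\OO_\omega(w)$ with $|w|\le n$. We will bound these larger quantities; the stated bounds follow since $\Delta_\rho^S\le \widetilde\Delta$ and $N_\rho^S\le\widetilde N$. The orbit of $\rho$ consists of the sequences cofinal with $1^\infty$. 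For such an $\omega=v\omega'$ with $v\in X_3$, the tail $\omega'$ is again cofinal with $1^\infty$, hence again lies in the orbit of $\rho$. This is what lets the recursion close up.

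For the recursion, write a word $w=s_1\cdots s_\ell$ and follow it at level $3$. Each suffix $s_k\cdots s_\ell$ moves $\omega=v\omega'$ to $v'\omega''$. Grouping the letters by the level-$3$ vertex through which the trajectory passes, each $s_k$ contributes a letter (or the empty word) of a section word $w_u$ for $u\in X_3$. The points of $\OO_\omega(w)$ lying in the cylinder $u\{0,1\}^{\N}$ are then exactly $u\cdot\OO_{\omega_u}(w_u)$ for an appropriate tail $\omega_u$. Here we must check that the inverted-orbit order (suffixes acting on the right) is compatible with taking sections: the suffix products restrict to suffix products of the section words, possibly with an $a$-letter deleted. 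This gives
\[
\delta_\omega(w)\le\sum_{u\in X_3}\delta_{\omega_u}(w_u).
\]
Next we invoke the classical contraction estimate for $\mathcal{G}$ (the same input behind \cref{thm:G-subexponential}): after reducing $w$ (removing $aa$ and merging letters of $\{b,c,d\}$), one has $\sum_{u\in X_3}|w_u|\le \tfrac34|w|+8$. Since we may take $\widetilde\Delta$ to be nondecreasing and dominated by a concave majorant, this yields
\[
\widetilde\Delta(n)\le 8\,\widetilde\Delta\bigl(\tfrac{3n}{32}+1\bigr)+O(1),
\]
and iterating gives $\widetilde\Delta(n)\le Cn^{\alpha}$ with $\alpha=\log 8/\log(32/3)<1$.

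For $N$, we note that $\OO_\omega(w)$ is determined by the eight sectional inverted orbits $\OO_{\omega_u}(w_u)$ together with the tails $\omega_u$. The tails are themselves determined by $\omega$ and finitely many bits. The number of ways to distribute the lengths $|w_u|$ is polynomial in $n$. Hence
\[
\log\widetilde N(n)\le \sum_u\log\widetilde N(n_u)+O(\log n)\qquad\text{with }\sum_u n_u\le\tfrac34 n+8.
\]
The same iteration, with the additive $O(\log n)$ terms summing geometrically against the $8^k$ branching, gives $\log\widetilde N(n)\le Cn^{\alpha}$ after possibly enlarging $C$.

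The main obstacle is the precise decomposition of inverted orbits into sectional inverted orbits. The reduction steps used to obtain the contraction must not change the set $\OO_\omega(w)$: deleting $aa$ removes a back-and-forth that does not enlarge the set, and merging $b,c,d$ letters changes a point only by a single fixed letter move. We must also ensure that the base points of the sections stay in the orbit of $\rho$, so that the maximum over base points is finite and the recursion closes. I would first verify this at level $1$ using the wreath decompositions $b=(a,c)$, $c=(a,d)$, $d=(1,b)$, and then compose three times.
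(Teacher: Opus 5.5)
\paragraph{Verdict: genuine gap.} The paper does not prove this statement; it quotes Bartholdi--Erschler. Your self-contained route is reasonable and two of its ingredients are sound, but the step joining them does not hold as written.

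\paragraph{What works.} Write $\omega=v\omega'$ with $v\in X_3$, and let $w_u$ be the raw section words: each letter $s_k$ contributes its level-$3$ state at the current vertex of the path starting at $u$. Then every point of $\OO_\omega(w)$ lies in $u'\cdot\OO_{\omega'}(w_u)$ for some $u$, where $u'$ is the image of $u$ under $w$. The tail $\omega'$ is the same for all $u$, so $\delta_\omega(w)\le\sum_u\delta_{\omega'}(w_u)$. The induction with $n^\alpha$, for $\alpha>\log 8/\log(32/3)$, would also close.

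\paragraph{The gap: reduction.}
\begin{itemize}
\item The $\tfrac34$-contraction holds only for sections that are reduced at each of the three intermediate levels. Raw level-$3$ sections have total length $\#b+2\#c+2\#d$, which is close to $|w|$ for a reduced word alternating $a$ with $c,d$.
\item $\OO_\omega$ depends on the word, not the group element, and reduction only removes points: $\OO_\omega(u\,aa\,v)=\OO_\omega(uv)\cup\{\omega a v\}$ and $\OO_\omega(u\,bc\,v)=\OO_\omega(u\,d\,v)\cup\{\omega c v\}$. Your remark that deleting $aa$ ``does not enlarge the set'' gives $\OO_\omega(\bar w)\subseteq\OO_\omega(w)$, which is the wrong inclusion for an upper bound.
\item The lost point $\omega c v$ is not a bounded perturbation of a point of $\OO_\omega(\bar w)$. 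It is the $v$-image of the neighbour $\omega c$, so it lies in an inverted orbit with a different base point. Absorbing it naively costs a multiplicative factor at every level, which destroys an exponent $<1$.
\item Cancellations also cascade: $\OO_\omega(z_1\cdots z_m z_m\cdots z_1)=\OO_\omega(z_m\cdots z_1)$, although the reduced word is empty. Such cascades do occur in level-$1$ sections of reduced words: a $d$ at a $0$-visit makes $x_{i-1}|_1$ and $x_{i+1}|_1$ adjacent in $w_0$, and a palindromic neighbourhood then cancels further.
\end{itemize}
So the inequality $\delta_\omega(w)\le\sum_u\delta_{\omega'}(\bar w_u)$ with $\sum_u|\bar w_u|\le\tfrac34|w|+8$, on which your recursion rests, is not established.

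\paragraph{What you would need.} A lemma replacing each raw section by a word of controlled length whose inverted orbit \emph{contains} the original one and which still contracts. Possible ingredients:
\begin{itemize}
\item replace a cancelling segment $z_1\cdots z_mz_m\cdots z_1$ by $z_m\cdots z_1$;
\item replace each maximal block of $\{b,c,d\}$-letters, or of $a$'s, by one of length at most $2$ (only two distinct points arise inside a block);
\item run the induction over base sets of bounded diameter in the Schreier ray rather than single points.
\end{itemize}
This bookkeeping is exactly the technical content hidden behind the citation to Bartholdi--Erschler; their argument uses Bartholdi's weighted norm and gets $\alpha=\log2/\log(2/\eta)\approx0.767$.

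\paragraph{A smaller issue in the count of $N$.} The part of $\OO_\omega(w)$ in a cylinder is only \emph{contained in}, not equal to, the corresponding section orbit. Already at level $1$, for $w=b$ and $\omega=0\omega'$, no point of $\OO_\omega(w)$ lies in $1\{0,1\}^{\N}$. So $\OO_\omega(w)$ is not determined by the section orbits, and you must also count subsets. This adds a factor $2^{\sum_u\delta_{\omega'}(w_u)}$ per step. That factor is harmless once the bound on $\Delta$ is known, but it has to be included.
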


Using our preceding lemmas, we transfer the estimates from~\cref{thm:BE} to the action of $\mathcal{G}$ on $\Gamma_n$ and the point $\rho_n$ and $\eta_n$.

\begin{proposition}\label{prop:BE-finite}
Let \(C\) and \(\alpha\) be as in~\cref{thm:BE} and let \(r_n = 2^{\lfloor n/2 \rfloor}\).
For every sufficiently large \(n\), every \(\xi_n \in \{\rho_n, \eta_n\}\), and every \(k \leq r_n\), we have
\[
\Delta_{\xi_n}^{S_n}(k) \leq C k^{\alpha}
\qquad\text{and}\qquad
N_{\xi_n}^{S_n}(k) \leq \exp(C k^{\alpha}),
\]
where \(\Delta_{\xi_n}^{S_n}\) and \(N_{\xi_n}^{S_n}\) refer to the action of \(\mathcal{G}_n\) on \(X_n\) with respect to \(S_n\).
\end{proposition}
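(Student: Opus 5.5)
The plan is to transfer the bounds of \cref{thm:BE} to the finite level along the labeled ball isomorphisms of \cref{lem:local-convergence} and \cref{cor:eta-local-convergence}. First identify words over $S_n$ with words over $S$ via $s_n \leftrightarrow s$. The action of $w$ on $X_n$ through $\pi_n$ agrees with the action of $w$ on $X_n \subseteq \T$. Fix $n$ large enough that both isomorphisms hold, and let $\xi_n \in \{\rho_n,\eta_n\}$. Write $\varphi\colon \Gamma_n(\xi_n, r_n)\to \Gamma_\infty(\rho, r_n)$ for the isomorphism $\xi_n h\mapsto \rho h$.

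Now let $w = s_1\cdots s_\ell$ be a word with $\ell \le k \le r_n$. Every point of the inverted orbit $\OO_{\xi_n}(w)$ has the form $\xi_n s_j s_{j+1}\cdots s_\ell$. The element $s_j\cdots s_\ell$ has word length at most $\ell\le r_n$, so this point lies in the ball $\Gamma_n(\xi_n,r_n)$. By the definition of $\varphi$ it is sent to $\rho\, s_j\cdots s_\ell$, and by the same reasoning $\varphi(\xi_n)=\rho$. Hence $\varphi$ maps $\OO_{\xi_n}(w)$ onto $\OO_\rho(w)$. Since $\varphi$ is a bijection on the balls, this gives $\delta_{\xi_n}(w)=\delta_\rho(w)$. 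Taking the maximum over words of length at most $k$ yields
\[
\Delta^{S_n}_{\xi_n}(k)=\Delta^S_\rho(k)\le Ck^\alpha .
\]

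For $N$, consider the map sending $\OO_{\xi_n}(w)$ to $\varphi(\OO_{\xi_n}(w))=\OO_\rho(w)$, for words $w$ of length at most $k$. This map is well defined, since $\varphi$ is applied to the set itself, and it is injective because $\varphi$ is injective on the ball. It therefore injects the set of inverted orbits at $\xi_n$ into the set of inverted orbits at $\rho$ coming from the same words. Consequently
\[
N^{S_n}_{\xi_n}(k)\le N^S_\rho(k)\le \exp(Ck^\alpha).
\]
In fact equality holds, since $\varphi$ is a bijection.

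The only point requiring care is that each inverted orbit point $\xi_n s_j\cdots s_\ell$ lies within distance $r_n$ of $\xi_n$. This holds because $\ell\le k\le r_n$, so the suffixes have length at most $r_n$ and $\varphi$ is defined on them. The identification of the right action with the labeled Schreier graph edges $\omega\mapsto\omega s$ matches \cref{lem:stabilizer-ball-criterion}, so the isomorphism is compatible with reading suffixes. Beyond this, I expect no real obstacle.
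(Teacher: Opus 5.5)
Your proposal is correct and follows the paper's proof essentially verbatim. You restrict the labeled ball isomorphism $\varphi\colon \Gamma_n(\xi_n,r_n)\to\Gamma_\infty(\rho,r_n)$ from \cref{lem:local-convergence} and \cref{cor:eta-local-convergence} to a bijection $\OO_{\xi_n}(w)\to\OO_\rho(w)$ for every word of length at most $k\le r_n$, which gives $\delta_{\xi_n}(w)=\delta_\rho(w)$ and an injection of inverted orbits at $\xi_n$ into those at $\rho$; your explicit check that the set map $O\mapsto\varphi(O)$ is well defined and injective is just a slightly more careful version of the paper's remark that distinct inverted orbits correspond to distinct ones.
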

\begin{proof}
We fix \(\xi_n \in \{\rho_n, \eta_n\}\) and \(k \leq r_n\).
By~\cref{lem:local-convergence} and~\cref{cor:eta-local-convergence}, the map
\[
\varphi \colon \Gamma_n(\xi_n, r_n) \rightarrow \Gamma_{\infty}(\rho, r_n),\ \xi_n h \mapsto \rho h
\]
is a well-defined isomorphism of labeled graphs.
Consider a word \(w = s_1 \ldots s_{\ell} \in S_n^{\ast}\) with \(|w| = \ell \leq k \leq r_n\).
Every element of the inverted orbit \(\OO_{\xi_n}(w)\) is of the form \(\xi_n s_i s_{i+1} \ldots s_{\ell}\), where \(s_i \ldots s_{\ell}\) is a word of length at most \(\ell \leq r_n\), and hence lies in the ball \(\Gamma_n(\xi_n, r_n)\).
The same holds for \(\OO_{\rho}(w) \subseteq \Gamma_{\infty}(\rho, r_n)\).
Under the isomorphism $\varphi$, the vertex \(\xi_n s_i \ldots s_{\ell}\) corresponds to \(\rho s_i \ldots s_{\ell}\), so the isomorphism restricts to a bijection \(\OO_{\xi_n}(w) \to \OO_{\rho}(w)\).
In particular \(\delta_{\xi_n}(w) = \delta_{\rho}(w)\), and distinct inverted orbits at \(\xi_n\) correspond to distinct inverted orbits at \(\rho\), which completes the proof.
\end{proof}

\section{The generating sets in automorphism groups of free groups}

Let \(Y\) be a finite set, and let $F(Y)$ be the free group over~$Y$. A \emph{signed basis permutation} of \(F(Y)\) is an automorphism of the form
\[
  u_y\mapsto u_{\sigma(y)}^{\epsilon_y},
  \qquad
  \sigma\in\Sym(Y),\quad \epsilon_y\in\{\pm1\}.
\]
The subgroup of $F(Y)$ generated by all signed basis permutations is called the \emph{monomial subgroup} $M(Y)$. We have \[M(Y)\cong (\Z/2)^Y\rtimes \Sym(Y).\] 
For distinct \(y,z\in Y\), let \(\lambda_{y,z}\in\Aut(F(Y))\) be the \emph{Nielsen transvection}
\[
  u_y\mapsto u_yu_z,
  \qquad
  u_{y'}\mapsto u_{y'}
  \quad\text{for }y'\neq y.
\]
The following fact is well known~\cite[Chapter I, Section 4]{LyndonSchupp01}.

\begin{theorem}\label{thm:nielsen-generating-aut-free}
For a finite set \(Y\), the group \(\Aut(F(Y))\) is generated by all signed basis permutations and all Nielsen transvections.
\end{theorem}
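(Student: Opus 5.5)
The approach is Nielsen's reduction argument: encode an automorphism by the tuple of images of the basis and simplify that tuple by elementary moves until it is the standard basis up to signs and order. Let \(H\le\Aut(F(Y))\) be the subgroup generated by all signed basis permutations and all Nielsen transvections. Fix \(\varphi\in\Aut(F(Y))\) and consider the tuple \((v_y)_{y\in Y}\) with \(v_y\defeq\varphi(u_y)\). This tuple is a basis of \(F(Y)\).

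\textbf{Step 1: the dictionary between moves and generators.} Precomposing \(\varphi\) with a generator of \(H\) changes the tuple by an elementary Nielsen move.
\begin{itemize}
\item For \(\lambda_{y,z}\), the automorphism \(\varphi\circ\lambda_{y,z}\) has tuple obtained by replacing \(v_y\) with \(v_yv_z\).
\item For a signed basis permutation \(\mu\), the tuple of \(\varphi\circ\mu\) is obtained by permuting the entries and inverting some of them.
\end{itemize}
The remaining moves are also realised inside \(H\). The move \(v_y\mapsto v_yv_z^{-1}\) comes from \(\lambda_{y,z}^{-1}\). Left multiplications \(v_y\mapsto v_z^{\pm1}v_y\) come from conjugating a transvection by the inversion of \(u_y\), using \(v_z v_y=(v_y^{-1}v_z^{-1})^{-1}\).

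Consequently, if some finite sequence of elementary Nielsen moves turns \((v_y)\) into \((u_{\sigma(y)}^{\epsilon_y})\), then \(\varphi\circ h\) is a signed basis permutation for some \(h\in H\). Hence \(\varphi\in H\).

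\textbf{Step 2: Nielsen reduction.} Define the complexity of a tuple as the sum of the reduced word lengths \(\sum_y|v_y|\). Call a tuple \emph{Nielsen reduced} if it satisfies three conditions:
\begin{itemize}
\item (N0) no entry is trivial;
\item (N1) \(|v^{\pm1}w^{\pm1}|\ge|v|,|w|\) for distinct entries;
\item (N2) \(|uvw|>|u|-|v|+|w|\) for \(u,w\neq v^{-1}\) formed from entries and their inverses.
\end{itemize}
Whenever (N1) fails, a single move strictly lowers the complexity. Whenever (N2) fails, one can either lower the complexity or lower a secondary well-ordered invariant; this invariant is obtained by fixing an order on \(Y^{\pm1}\) and comparing the "left halves" of the entries. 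Induction on the pair (complexity, secondary invariant) then shows that finitely many moves produce a Nielsen reduced tuple. Entry (N0) is automatic here, since a basis has no trivial entries.

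\textbf{Step 3: reduced bases are signed permutations of \(Y\).} The key property of a Nielsen reduced tuple is that in any reduced product \(w_1\cdots w_k\) of its entries and their inverses (with \(w_{i+1}\neq w_i^{-1}\)), each factor keeps a "middle" letter that survives cancellation. In particular \(|w_1\cdots w_k|\ge k\).

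Now apply this to a reduced basis. Each \(u_x\) is a product of the entries, and such a product of length \(1\) must have \(k=1\). So every generator \(u_x\) equals some entry or its inverse. Since the tuple has exactly \(|Y|\) entries, the entries are precisely \(u_{\sigma(y)}^{\pm1}\) for some permutation \(\sigma\in\Sym(Y)\). Combined with Step 1, this gives \(\varphi\in H\).

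The main obstacle is Step 2 together with the middle-letter property: one must carefully handle the (N2) case and check that cancellation in \(w_iw_{i+1}\) never eats more than half of either factor. I would follow the standard treatment in \cite[Chapter I, Propositions 2.2 and 2.5]{LyndonSchupp01} and cite it for these combinatorial details rather than reprove them.
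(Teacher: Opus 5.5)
Your proposal is correct and is essentially the argument the paper relies on: the paper gives no proof of its own and cites \cite[Chapter I, Section 4]{LyndonSchupp01}, where the result is proved the same way. There one Nielsen-reduces the image basis and uses the bound $|w_1\cdots w_k|\ge k$ for N-reduced sets to conclude that the reduced basis is a signed permutation of $Y$. The only bookkeeping needed to match elementary Nielsen moves with the stated generators is your Step~1 check that inverted and left-multiplied moves come from $\lambda_{y,z}^{-1}$ and from conjugating transvections by sign changes, and that check is done correctly.
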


We set 
\[
  Y_n\defeq X_n\times \{1,2,3,4\}, 
\]
and let \(F(Y_n)\) be the free group with basis $\{u_y\mid y\in Y_n\}$. 
Thus \(F(Y_n)\cong F_{4\cdot 2^n}\). 
Next we define a finite subset $T_n\subset \Aut(F(Y_n))$
of automorphisms of $F(Y_n)$, which is partitioned into four types.  

\begin{itemize}
\item\textbf{Type 1: The truncated Grigorchuk generators.}
For every generator \(s_n\in S_n\) of $\mathcal{G}_n$, we denote -- by the same symbol -- the automorphism of \(F(Y_n)\) given by
\[
  u_{(x,i)}\mapsto u_{(x s_n,i)}
\]
for every $(x,i)\in Y_n$. 

\item\textbf{Type 2: The monomial group over \(\eta_n\).}
Let \(M_{\eta_n}(Y_n)\leq \Aut(F(Y_n))\) be the group of all signed permutations of the four basis elements
\[
  u_{(\eta_n,1)},\ldots,u_{(\eta_n,4)}
\]
and fixing all other basis elements. We call $M_{\eta_n}(Y_n)$ the \emph{monomial group over the fiber $\eta_n$}, and we have $M_{\eta_n}(Y_n)\cong (\Z/2\Z)^4\rtimes \Sym(4)$. 

\item\textbf{Type 3: The linking transposition.}
Let \(\tau_n\) be the basis transposition swapping
$u_{(\rho_n,1)}$ and $u_{(\theta_n,1)}$ 
and fixing all other basis elements.

\item\textbf{Type 4: The Nielsen transvection.}
Let \(\nu_n\) be the Nielsen transvection
\[
  \nu_n=\lambda_{(\rho_n,2),(\rho_n,3)}.
\]
\end{itemize}
\begin{definition}\label{def:Tn}
We define $
  T_n\defeq S_n\cup M_{\eta_n}(Y_n)\cup\{\tau_n,\nu_n\}
  \subseteq \Aut(F(Y_n))$.
\end{definition}

\begin{lemma}\label{lem:full-signed-monomial}
The subgroup \(\langle T_n\rangle\) contains the monomial subgroup $M(Y_n)$. 
\end{lemma}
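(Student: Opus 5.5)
The plan is to work entirely inside the finite group \(M(Y_n)\cong(\Z/2)^{Y_n}\rtimes\Sym(Y_n)\), since every element of \(S_n\cup M_{\eta_n}(Y_n)\cup\{\tau_n\}\) is a signed basis permutation. We show that \(\langle T_n\rangle\cap M(Y_n)\) contains all sign changes and all of \(\Sym(Y_n)\). Three facts will be used repeatedly. A Type 1 element \(g\in\mathcal{G}_n\) acts by \(u_{(x,i)}\mapsto u_{(xg,i)}\). Conjugating a signed permutation supported on the fiber \(\{x\}\times\{1,\dots,4\}\) by such a \(g\) yields the corresponding signed permutation supported on the fiber over \(xg\). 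Finally, \(\mathcal{G}_n\) acts transitively on \(X_n\), since \(\Gamma_n\) is connected by spherical transitivity.

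\textbf{Step 1 (monomial groups over all fibers).} Since \(\mathcal{G}_n\) is transitive on \(X_n\), conjugating \(M_{\eta_n}(Y_n)\) by Type 1 elements gives the monomial group \(M_x(Y_n)\) over every fiber \(x\in X_n\). In particular \(\langle T_n\rangle\) contains every single sign change \(u_y\mapsto u_y^{-1}\) for \(y\in Y_n\), hence the whole factor \((\Z/2)^{Y_n}\). It also contains every transposition of two basis elements lying in a common fiber. So it remains to obtain \(\Sym(Y_n)\).

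\textbf{Step 2 (transpositions).} Let \(P\le\Sym(Y_n)\) be the group of underlying permutations of \(\langle T_n\rangle\cap M(Y_n)\). Define \(y\sim y'\) if \(y=y'\) or the transposition \((y\,y')\) lies in \(P\). This is a \(P\)-invariant equivalence relation, because products and conjugates of transpositions give \((y\,y'')\) from \((y\,y')\) and \((y'\,y'')\). We then check three properties.
\begin{itemize}
\item Each \(\sim\)-class is a union of whole fibers, by Step 1.
\item The class of \((\rho_n,1)\) also contains \((\theta_n,1)\), via \(\tau_n\).
\item The set \(B\subseteq X_n\) of fibers in that class is a block of imprimitivity for the \(\mathcal{G}_n\)-action on \(X_n\), because Type 1 elements lie in \(P\) and preserve fibers.
\end{itemize}
If \(B=X_n\), then every transposition of \(Y_n\) lies in \(P\). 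Hence \(P=\Sym(Y_n)\), which together with Step 1 gives \(M(Y_n)\le\langle T_n\rangle\).

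\textbf{Step 3 (main obstacle: the block \(B\) is everything).} This is the delicate point, and it depends on the position of \(\theta_n\). If \(\theta_n\) were the sibling \(\eta_n\) of \(\rho_n\), the statement would fail. Indeed, the partition of \(X_n\) into pairs of siblings is \(\mathcal{G}_n\)-invariant, so every generator in \(T_n\setminus\{\nu_n\}\) preserves the corresponding partition of \(Y_n\) into pairs of sibling fibers. So I take \(\theta_n\) to lie in the other first-level subtree from \(\rho_n=1^n\), for instance \(\theta_n=0\,1^{n-1}\). We use that \(\mathcal{G}_n\le\Aut(\T)\) preserves the tree structure, so \(\mathcal{G}_n\)-blocks are unions of sets of leaves below vertices of a common level. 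Combined with level-transitivity, the smallest block containing two leaves is the set of leaves below their longest common prefix. This can be seen directly: for the Grigorchuk group the rigid stabilizers act level-transitively on each subtree. Alternatively, one takes the stabilizer of \(\rho_n\) and shows that its orbit through \(\theta_n\), together with \(\rho_n\), generates all of \(X_n\) as a block. For \(\rho_n\) and \(\theta_n\) differing already in the first letter, this common prefix is the root, so \(B=X_n\). I expect this block computation, namely confirming that no proper \(\mathcal{G}_n\)-block contains both \(\rho_n\) and \(\theta_n\), to be the only non-formal part of the argument.
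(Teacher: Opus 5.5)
\textbf{There is a genuine gap, in Step~3.} Your Steps~1 and~2 are correct and are the paper's argument in block language. The paper also gets all fibrewise monomial groups from transitivity of $\mathcal{G}_n$. Your condition $B=X_n$ is equivalent to connectedness of the graph $\Lambda_n$ with edges $\{\rho_n g,\theta_n g\}$, which the paper simply cites from \cite{SauerSchesler2026}. Your choice $\theta_n=01^{n-1}$ matches the paper's implicit $\theta_n=\rho_na_n$.

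Step~3 is the only non-formal point, and neither justification you give for it holds.
\begin{itemize}
\item Tree-preservation plus level-transitivity does not force the smallest block containing two leaves to be the set of leaves below their longest common prefix. The Klein four-group generated by $(00\,10)(01\,11)$ and $(00\,01)(10\,11)$ acts by tree automorphisms, transitively on $X_1$ and $X_2$. Yet $\{00,11\}$ is a block for its action on $X_2$.
\item The claim that rigid stabilizers of $\mathcal{G}$ act level-transitively on subtrees is false. If $(1,h)\in\mathcal{G}$, then $h$ fixes the first level. To see this, work on $\St_{\mathcal{G}}(1)=\langle b,c,d,b^a,c^a,d^a\rangle$ and pass to $\mathcal{G}^{\mathrm{ab}}\cong(\Z/2)^3$. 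The $\bar a$-coordinate of the second section always equals the $\bar c$-coordinate of the first section, as one checks on these six generators. So $\mathrm{rist}_{\mathcal{G}}(1)$ is not even transitive on the two children of the vertex~$1$.
\end{itemize}
Your ``alternatively'' sentence points in the right direction, but it is not carried out.

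\textbf{The missing argument is one line.} Since $b,c,d$ fix the ray $1^\infty$ (as used in the proof of \cref{lem:fixation-transfer}), the elements $b_n,c_n,d_n$ fix $\rho_n$, while $\rho_na_n=\theta_n$. Let $B\subseteq X_n$ be a block containing $\rho_n$ and $\theta_n$. Then $\rho_n\in Bs\cap B$ for $s\in\{b_n,c_n,d_n\}$, and $\theta_n\in Ba_n\cap B$. Hence $Bs=B$ for all four generators, so $B$ is $\mathcal{G}_n$-invariant, and $B=X_n$ by transitivity. Equivalently, $\langle\St_{\mathcal{G}_n}(\rho_n),a_n\rangle=\mathcal{G}_n$, which is exactly why $\Lambda_n$ is connected.

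With this in place of your Step~3, your proof is complete. It is also self-contained, whereas the paper outsources this step to \cite{SauerSchesler2026}.
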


\begin{proof}
By spherical transitivity of \(\mathcal{G}\), the group \(\mathcal{G}_n\) acts transitively on \(X_n\). Hence the fibrewise monomial subgroup $M_{x}(Y_n)$ appears as the conjugate of \(M_{\eta_n}(Y_n)\) by some word in \(S_n\). 
In particular, \(\langle T_n\rangle\) contains every signed basis transposition inside each fibre.
The graph \(\Lambda_n\) on the vertex set \(X_n\) whose edges are
\[
  \{\rho_n g,\theta_n g\},~~g\in\mathcal{G}_n
\]
is connected, which is shown in the proof of Theorem~5.4 in~\cite{SauerSchesler2026}. 

The conjugates of \(\tau_n\) by elements of \(\mathcal{G}_n\) are precisely the transpositions of the basis elements
\[
  u_{(\rho_n g,1)}
  \quad\text{and}\quad
  u_{(\theta_n g,1)}
\]
for the edges of \(\Lambda_n\). Together with the transpositions inside the fibres, these are the edge transpositions of a connected graph on the vertex set \(Y_n\). Therefore they generate \(\Sym(Y_n)\). Since inversions of basis elements are contained in the conjugates of \(M_{\eta_n}(Y_n)\), the statement follows. 
\end{proof}

\begin{theorem}\label{thm: Tn-generates-Aut}
The subset $T_n$ generates $\Aut(F(Y_n))$. 
\end{theorem}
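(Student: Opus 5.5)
By \cref{thm:nielsen-generating-aut-free}, it suffices to show that \(\langle T_n\rangle\) contains every signed basis permutation and every Nielsen transvection \(\lambda_{y,z}\) with \(y\neq z\) in \(Y_n\). The signed basis permutations are already handled: by \cref{lem:full-signed-monomial}, the monomial subgroup \(M(Y_n)\) lies in \(\langle T_n\rangle\). So the plan reduces to producing all Nielsen transvections from the single transvection \(\nu_n\) together with \(M(Y_n)\).

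The key step is a conjugation computation. Take a basis permutation \(\pi\in M(Y_n)\) induced by a permutation \(\sigma\) of \(Y_n\) with no signs, i.e.\ \(u_y\mapsto u_{\sigma(y)}\). Then \(\pi\lambda_{y,z}\pi^{-1}=\lambda_{\sigma(y),\sigma(z)}\), with the appropriate convention for the order of composition. One checks this directly on basis elements: \(\pi^{-1}\) sends \(u_{\sigma(y)}\) to \(u_y\), then \(\lambda_{y,z}\) sends \(u_y\) to \(u_yu_z\), and \(\pi\) sends this to \(u_{\sigma(y)}u_{\sigma(z)}\). Every other basis element \(u_{\sigma(y')}\) with \(y'\neq y\) is fixed. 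If the paper's composition convention is the opposite one, we simply replace \(\sigma\) by \(\sigma^{-1}\); since all of \(\Sym(Y_n)\) is available, the conclusion is unaffected.

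Since \(\Sym(Y_n)\) acts transitively on ordered pairs of distinct elements of \(Y_n\) (here \(|Y_n|\geq 4\geq 2\)), conjugating \(\nu_n=\lambda_{(\rho_n,2),(\rho_n,3)}\) by suitable basis permutations in \(\langle T_n\rangle\) yields every \(\lambda_{y,z}\). Together with \(M(Y_n)\subseteq\langle T_n\rangle\), \cref{thm:nielsen-generating-aut-free} then gives \(\langle T_n\rangle=\Aut(F(Y_n))\).

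There is essentially no obstacle in this argument. The only point needing care is the composition convention for automorphisms, which determines whether one conjugates by \(\pi\) or by \(\pi^{-1}\); as noted, this is harmless. One might worry that \cref{thm:nielsen-generating-aut-free} also requires transvections of the forms \(u_y\mapsto u_zu_y\) or \(u_y\mapsto u_yu_z^{-1}\). These are likewise obtained by conjugating \(\lambda_{y,z}\) with sign changes, namely the inversions \(u_y\mapsto u_y^{-1}\) and \(u_z\mapsto u_z^{-1}\), which lie in \(M(Y_n)\).
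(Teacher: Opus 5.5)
Your proposal is correct and follows the paper's proof: use \cref{lem:full-signed-monomial} to get all signed basis permutations, conjugate $\nu_n$ by basis permutations to obtain every Nielsen transvection, and conclude with \cref{thm:nielsen-generating-aut-free}. Your explicit conjugation formula $\pi\lambda_{y,z}\pi^{-1}=\lambda_{\sigma(y),\sigma(z)}$ and the appeal to the $2$-transitivity of $\Sym(Y_n)$ supply details that the paper leaves implicit.
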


\begin{proof}
By \cref{lem:full-signed-monomial}, the group \(\langle T_n\rangle\) contains all signed permutations of the basis. It also contains the Nielsen transvection \(\nu_n\). Conjugating \(\nu_n\) by the signed basis permutations on \(Y_n\), we obtain every Nielsen transvection. 
 Hence \(\langle T_n\rangle=\Aut(F(Y_n))\) by \cref{thm:nielsen-generating-aut-free}.
\end{proof}

\section{Proof of~\cref{thm: main estimate}}

Throughout this section we write
\[
  r_n\defeq 2^{\lfloor n/2\rfloor}.
\]

\begin{lemma}\label{lem:commuting-supports}
Let \(n\geq 5\), and let \(g,h\in S_n^\ast\) have length at most \(r_n/3\). Then every  element of \(M_{\eta_n}(Y_n)^h\) commutes with the $g$-conjugates \(\tau_n^g\) and \(\nu_n^g\) and $(\nu_n^g)^{-1}$. 
\end{lemma}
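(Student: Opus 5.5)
The plan is to reduce the statement to a disjointness-of-supports argument and then to use the separation estimate of~\cref{lem:separation}. For an automorphism $\varphi$ of $F(Y_n)$, call a subset $Z\subseteq Y_n$ a \emph{support} of $\varphi$ if $\varphi$ fixes $u_y$ for every $y\notin Z$, and if both $\varphi$ and $\varphi^{-1}$ map each $u_z$ with $z\in Z$ into the subgroup $F(Z)$. The first step is the elementary observation that two automorphisms with disjoint supports $Z$ and $Z'$ commute. Indeed, both compositions agree on every basis element. On $u_z$ with $z\in Z$, the second automorphism acts trivially on $F(Z)$, so both compositions give $\varphi(u_z)$. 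The symmetric argument applies on $Z'$, and outside $Z\cup Z'$ both compositions are the identity.

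The second step is to compute supports of the conjugates. Every element of $M_{\eta_n}(Y_n)$ has support the fibre $\{\eta_n\}\times\{1,2,3,4\}$. The element $\tau_n$ has support $\{(\rho_n,1),(\theta_n,1)\}$, and $\nu_n^{\pm1}$ has support $\{(\rho_n,2),(\rho_n,3)\}$. A Type~1 generator $s_n$ permutes the basis via $u_{(x,i)}\mapsto u_{(xs_n,i)}$. Hence conjugating by a word $h\in S_n^\ast$ transports supports along the corresponding permutation of $X_n$. Concretely, $M_{\eta_n}(Y_n)^h$ is supported on the fibre over $\eta_n\bar h$, where $\bar h$ is either $h$ or its reverse, depending on the conjugation convention. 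Since all generators in $S_n$ are involutions, both words have the same length. Similarly, $\tau_n^g$ and $(\nu_n^{g})^{\pm1}$ are supported on $\{(\rho_n\bar g,1),(\theta_n\bar g,1)\}$ and on $\{(\rho_n\bar g,2),(\rho_n\bar g,3)\}$, respectively. So by the first step it suffices to show
\[
\eta_n\bar h\neq\rho_n\bar g\qquad\text{and}\qquad \eta_n\bar h\neq\theta_n\bar g .
\]

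The third step is the distance estimate. Suppose $\eta_n\bar h=\rho_n\bar g$. Then $\rho_n$ and $\eta_n$ lie at distance at most $|g|+|h|\le 2r_n/3$ in $\Gamma_n$. On the other hand, \cref{lem:separation} gives distance at least $2^{n-3}$. For $n\ge5$ we have $2^{n-3}>\tfrac23\,2^{\lfloor n/2\rfloor}$; for example, $n=5$ gives $4>8/3$, and the gap only widens as $n$ grows. This is a contradiction, so the first inequality holds.

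The main obstacle is the vertex $\theta_n$, which is only introduced through~\cite{SauerSchesler2026}. I would handle it by either of two routes, trying the first route first. In the first route, $\theta_n=\rho_n t$ for some fixed short word $t$, so $d_{\Gamma_n}(\rho_n,\theta_n)$ is bounded by a constant $c$. Then the triangle inequality gives
\[
d_{\Gamma_n}(\rho_n,\eta_n)\le 2r_n/3+c<2^{n-3}
\]
for $n\ge5$, which again contradicts \cref{lem:separation}, possibly after checking the smallest cases by hand. In the second route, $\theta_n$ shares the prefix $1^{n-1}$ with $\rho_n$ or has the same stabilizer in $\mathcal{G}$. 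Then the argument of \cref{lem:separation}, via the nucleus of \cref{lem:nucleus-depth}, applies verbatim to the pair $(\theta_n,\eta_n)$ and yields the same lower bound $2^{n-3}$. With both inequalities established, the disjoint-support observation finishes the proof.
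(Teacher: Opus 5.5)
Your proposal is correct and matches the paper's proof. The paper also argues via disjoint supports of the conjugates, reduces to \(\eta_n h\notin\{\rho_n g,\theta_n g\}\), and derives a contradiction with \cref{lem:separation}. Your first route for \(\theta_n\) is exactly the paper's, since \(\theta_n=\rho_n a_n\) (this is used in the proof of \cref{thm: main estimate}); so \(c=1\), and \(1+2r_n/3<2^{n-3}\) already holds at \(n=5\) without checking small cases by hand. The second route is unnecessary, and its hypothesis actually fails: \(\theta_n=01^{n-1}\) neither shares the prefix \(1^{n-1}\) with \(\rho_n\) nor has the same stabilizer.
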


\begin{proof}
The conjugate of \(M_{\eta_n}(Y_n)\) by \(h\) is supported on the basis elements over the fibre \(\eta_n h\). The conjugate of \(\nu_n^{\pm1}\) by \(g\) is supported on the basis elements \((\rho_n g,2)\) and \((\rho_n g,3)\). The conjugate of \(\tau_n\) by \(g\) is supported on \((\rho_n g,1)\) and \((\theta_n g,1)\).

Since \(n\geq 5\), we have
\[
  1+2r_n/3<2^{n-3}.
\]
Thus it is enough to prove that \(\eta_n h\) is distinct from both \(\rho_n g\) and \(\theta_n g\). If \(\eta_n h=\rho_n g\), then
\[
  d_{\Gamma_n}(\rho_n,\eta_n)\leq \abs{g}+\abs{h}\leq 2r_n/3,
\]
contradicting \cref{lem:separation}. If \(\eta_n h=\theta_n g\), then
\[
  d_{\Gamma_n}(\rho_n,\eta_n)\leq 1+\abs{g}+\abs{h}\leq 1+2r_n/3,
\]
again contradicting \cref{lem:separation}.
\end{proof}

\begin{lemma}\label{lem:normal-form}
Let \(n\geq 5\), and let \(w\) be a word over \(T_n\cup T_n^{-1}\) of length \(\ell\leq r_n/3\). Then the element represented by \(w\) can be written as
\[
  p_1p_2p_3p_4,
\]
where \(p_1\) is represented by a word of the form \(g_1\cdots g_{\ell}\) over \(S_n\cup\{1\}\), and the remaining factors have the form
\begin{align*}
  p_2
  &=
  (a_1^{(2)})^{g_1g_2\cdots g_{\ell}}
  (a_2^{(2)})^{g_2\cdots g_{\ell}}
  \cdots
  (a_{\ell}^{(2)})^{g_{\ell}},\\
  p_3
  &=
  (a_1^{(3)})^{g_1g_2\cdots g_{\ell}}
  (a_2^{(3)})^{g_2\cdots g_{\ell}}
  \cdots
  (a_{\ell}^{(3)})^{g_{\ell}},\\
  p_4
  &=
  (a_1^{(4)})^{g_1g_2\cdots g_{\ell}}
  (a_2^{(4)})^{g_2\cdots g_{\ell}}
  \cdots
  (a_{\ell}^{(4)})^{g_{\ell}}.
\end{align*}
Here, for each \(i\), we have
\[
  g_i\in S_n\cup\{1\},
  \qquad
  a_i^{(2)}\in M_{\eta_n}(Y_n),
  \qquad
  a_i^{(3)}\in\{\tau_n,1\},
  \qquad
  a_i^{(4)}\in\{\nu_n,\nu_n^{-1},1\}.
\]
\end{lemma}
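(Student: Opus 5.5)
We argue by induction on \(\ell\), but it is cleaner to first rewrite \(w\) letter by letter. Note that \(S_n\), \(M_{\eta_n}(Y_n)\) and \(\{\tau_n\}\) are closed under inversion (the generators in \(S_n\) are involutions, \(M_{\eta_n}(Y_n)\) is a group and \(\tau_n\) is an involution). So each letter of \(w\) lies in \(S_n\cup M_{\eta_n}(Y_n)\cup\{\tau_n,\nu_n,\nu_n^{-1}\}\). We pad: for the \(i\)-th letter \(t_i\), exactly one of \(g_i, a_i^{(2)}, a_i^{(3)}, a_i^{(4)}\) is set equal to \(t_i\) and the others equal \(1\). Then \(w = \prod_i g_i a_i^{(2)}a_i^{(3)}a_i^{(4)}\), where each \(g_i\in S_n\cup\{1\}\) and the \(a_i^{(j)}\) lie in the prescribed sets.

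With the convention \(x^g=g^{-1}xg\), we push all \(g_i\) to the left. This gives the standard identity
\[
\prod_{i=1}^{\ell} g_i b_i = (g_1\cdots g_\ell)\prod_{i=1}^{\ell} b_i^{\,g_{i+1}\cdots g_\ell},
\qquad b_i=a_i^{(2)}a_i^{(3)}a_i^{(4)}.
\]
We check this by induction on \(\ell\). The exponents here are \(g_{i+1}\cdots g_\ell\), which may be off by one from those in the statement. We absorb this by reindexing, or equivalently by writing \(w=\prod_i b_i g_i\), i.e.\ ordering each padded block so that the Grigorchuk letter comes after the \(a\)'s. Then
\[
\prod_i b_i g_i=(g_1\cdots g_\ell)\prod_i b_i^{\,g_i\cdots g_\ell},
\]
which matches the exponents \(g_i\cdots g_\ell\) of the statement exactly.

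So \(p_1=g_1\cdots g_\ell\), and the remaining product is \(\prod_i c_i^{(2)}c_i^{(3)}c_i^{(4)}\) with \(c_i^{(j)}=(a_i^{(j)})^{h_i}\) and \(h_i=g_i\cdots g_\ell\), a word of length \(\le\ell\le r_n/3\). It remains to reorder this product into \(p_2p_3p_4\), i.e.\ to move all type-2 factors to the left of all type-3 and type-4 factors. This is the main step. Each type-2 factor \(c_i^{(2)}\) lies in \(M_{\eta_n}(Y_n)^{h_i}\). Each type-3/4 factor \(c_j^{(3)},c_j^{(4)}\) is a \(h_j\)-conjugate of \(\tau_n\), \(\nu_n\) or \(\nu_n^{-1}\), with \(|h_i|,|h_j|\le r_n/3\). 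By \cref{lem:commuting-supports} these commute. Hence we can bubble every \(c_i^{(2)}\) leftward past all type-3/4 factors, preserving the relative order of the type-2 factors among themselves and of the type-3/4 factors among themselves.

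This yields \(p_2\cdot Q\), where \(Q\) is the ordered product of the type-3/4 factors. The remaining obstacle is separating \(p_3\) from \(p_4\), since conjugates of \(\tau_n\) and of \(\nu_n\) need not commute in general. The plan is to handle this the same way, via a padding trick. Whenever a letter is \(\tau_n\), the type-4 entries of that block are \(1\), and vice versa. Because \(Q\) interleaves only nontrivial factors, we may choose to assign each letter's nontrivial type-3/4 factor to the slot so that the ordered product \(Q\) is literally the product of the nontrivial factors in order. If the intended reading of the statement requires \(Q=p_3p_4\) as a genuine split, we would instead verify that a \(g\)-conjugate of \(\tau_n\) and an \(h\)-conjugate of \(\nu_n^{\pm1}\) have disjoint supports for short \(g,h\). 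That is, we would check that the \((\cdot,1)\)-coordinates of \(\tau_n^g\) avoid the \((\cdot,2),(\cdot,3)\)-coordinates of \(\nu_n^h\). This holds automatically, since the fibre indices differ (\(1\) versus \(2,3\)). Thus conjugates of \(\tau_n\) and of \(\nu_n^{\pm1}\) commute, so \(Q=p_3p_4\), completing the proof.
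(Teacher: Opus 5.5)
Your proof is correct and follows essentially the same route as the paper: you write each letter as $b_i g_i$, push the $g_i$ to the left to get the exponents $g_i\cdots g_\ell$, and move the type-2 factors past the type-3/4 factors using \cref{lem:commuting-supports}. You then separate types 3 and 4 because they are supported on sheet $1$ versus sheets $2,3$; the ``padding trick'' aside before that step does not by itself give the split $Q=p_3p_4$ and can be dropped, since the sheet-disjointness argument you give afterwards is exactly what the paper uses.
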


\begin{proof}
Write the word as a product of letters
\[
  t_1\cdots t_\ell
\]
and write each letter in the form
\[
  t_i=a_i^{(2)}a_i^{(3)}a_i^{(4)}g_i,
\]
where \(g_i\in S_n\cup\{1\}\), \(a_i^{(2)}\in M_{\eta_n}(Y_n)\), \(a_i^{(3)}\in\{\tau_n,1\}\), and \(a_i^{(4)}\in\{\nu_n,\nu_n^{-1},1\}\), with at most one of \(a_i^{(2)},a_i^{(3)},a_i^{(4)}\) non-trivial. This is possible because \(M_{\eta_n}(Y_n)\) is a group, the elements of \(S_n\) and \(\tau_n\) are involutions, and \(T_n\cup T_n^{-1}\) only adds \(\nu_n^{-1}\).

Pushing all \(g_i\)'s to the left gives
\[
  t_1\cdots t_\ell
  =
  g_1\cdots g_\ell
  \prod_{i=1}^{\ell}
  \bigl(a_i^{(2)}a_i^{(3)}a_i^{(4)}\bigr)^{g_i\cdots g_\ell},
\]
where \(x^g\defeq g^{-1}xg\). For \(j\in\{2,3,4\}\), call the elements
\[
  (a_i^{(j)})^{g_i\cdots g_\ell}
  \qquad (1\leq i\leq \ell)
\]
the conjugated factors of type \(j\). Thus ``type'' refers to these conjugates appearing after the rewriting, not to different generators of type \(3\) or type \(4\).

For each \(i\), the suffix \(g_i\cdots g_\ell\) has \(S_n\)-length at most \(\ell\leq r_n/3\). Hence \cref{lem:commuting-supports} implies that every conjugated factor of type \(2\) commutes with every conjugated factor of type \(3\) and with every conjugated factor of type \(4\). A conjugated factor of type \(3\) is supported on sheet \(1\), whereas a conjugated factor of type \(4\) is supported on sheets \(2\) and \(3\); hence these two factors commute as well. We may therefore move all conjugated factors of type \(2\) before all conjugated factors of type \(3\), and all conjugated factors of type \(3\) before all conjugated factors of type \(4\), preserving the order inside each type. This gives exactly the three displayed products \(p_2,p_3,p_4\), and hence the claimed normal form.
\end{proof}

\begin{proof}[Proof of~\cref{thm: main estimate}]
Let \(P_i(\ell)\) be the set of all possible factors \(p_i\) appearing in the normal form of \cref{lem:normal-form} for words of length at most \(\ell\le r_n/3\).

The first factor satisfies
\[
  \abs{P_1(\ell)}\leq \exp(C_1\ell^\gamma)
\]
by \cref{thm:G-subexponential}, after passing to the finite quotient \(\mathcal{G}_n\).

Let \(C_0>0\) be a constant such that the estimates from \cref{prop:BE-finite} hold with \(C_0\) for both \(\rho_n\) and \(\eta_n\).

Consider the factor \(p_2\) of a word of length at most~$\ell$. With the notation of \cref{lem:normal-form}, put \(u=g_1\cdots g_\ell\). Each conjugate
\[
  (a_i^{(2)})^{g_i\cdots g_\ell}
\]
is supported on the fibre over the point \(\eta_n g_i\cdots g_\ell\). Hence \(p_2\) is supported over the inverted orbit \(\OO_{\eta_n}(u)\). The number of possible inverted orbits \(\OO_{\eta_n}(u)\), with \(u\) of length at most \(\ell\), is at most \(\exp(C_0\ell^\alpha)\). For a fixed such orbit, \cref{prop:BE-finite} gives \(|\OO_{\eta_n}(u)|\leq C_0\ell^\alpha\), and at each point of the orbit the value of \(p_2\) lies in a copy of the finite group \(M_{\eta_n}(Y_n)\), of order \(2^4\cdot 4!\). Therefore
\[
  \abs{P_2(\ell)}
  \leq
  \exp(C_0\ell^\alpha)\cdot (2^4\cdot 4!)^{C_0\ell^\alpha}
  \leq
  \exp(C_2\ell^\alpha)
\]
for a suitable constant~$C_2>0$. 

The factor \(p_3\) of a word of length~$\ell$ is a permutation of the sheet-one basis elements. Each non-trivial conjugate
\[
  (a_i^{(3)})^{g_i\cdots g_\ell}
\]
is the transposition of \(u_{(\rho_n g_i\cdots g_\ell,1)}\) and \(u_{(\theta_n g_i\cdots g_\ell,1)}\). Thus the support of \(p_3\) is contained in
\[
  \{\rho_n g_i\cdots g_\ell,\ \theta_n g_i\cdots g_\ell\mid 1\leq i\leq \ell\}\times\{1\}.
\]
We now dominate this set by one inverted orbit based at \(\rho_n\). Let
\[
  \widetilde u
  :=
  a_ng_1a_n\,a_ng_2a_n\cdots a_ng_{\ell-1}a_n\,a_ng_\ell .
\]
This is a word over \(S_n\cup\{1\}\) of length at most \(3\ell\le r_n\). Since \(\theta_n=\rho_na_n\) and \(a_n^2=1\), the suffixes of \(\widetilde u\) starting at \(g_i\) give the points \(\rho_n g_i\cdots g_\ell\), while the suffixes starting at the \(a_n\) immediately before \(g_i\) give the points \(\theta_n g_i\cdots g_\ell\). Hence the support of \(p_3\) is contained in \(\OO_{\rho_n}(\widetilde u)\times\{1\}\). 
There are at most \(\exp(C_0(3\ell)^\alpha)\) possible inverted orbits \(\OO_{\rho_n}(\widetilde u)\), and each has cardinality at most \(C_0(3\ell)^\alpha\). For a fixed orbit \(O\), the factor \(p_3\) is at most an arbitrary permutation of the sheet-one basis elements over \(O\), so there are at most \(|O|!\) possibilities. Hence 
\[
  \abs{P_3(\ell)}
  \leq
  \exp(C_0(3\ell)^\alpha)\cdot \bigl(C_0(3\ell)^\alpha\bigr)!
  \leq
  \exp(C_3\ell^\alpha\log \ell)
\]
for suitable constant~$C_3>0$. 

Finally, we consider the factor \(p_4\) of a word of length~$\ell$. Setting \(u=g_1\cdots g_\ell\), the factor \(p_4\) is supported over the inverted orbit \(\OO_{\rho_n}(u)\). The number of possible such orbits is at most \(\exp(C_0\ell^\alpha)\), and each has cardinality at most \(C_0\ell^\alpha\). At a point \(x\) of this orbit the possible contribution is a power of the transvection
\[
  u_{(x,2)}\mapsto u_{(x,2)}u_{(x,3)}
\]
with exponent between \(-\ell\) and \(\ell\), so there are at most \(2\ell+1\) choices at each point. Thus, 
\[
  \abs{P_4(\ell)}
  \leq
  \exp(C_0\ell^\alpha)\cdot (2\ell+1)^{C_0\ell^\alpha}
  \leq
  \exp(C_4\ell^\alpha\log \ell)
\]
for a suitable constant \(C_4>0\).

Choose \(\beta\in(\max\{\alpha,\gamma\},1)\). Because of \(\ell^\alpha\log\ell\leq C_\beta\ell^\beta\) the product
\[
  \abs{P_1(\ell)}\abs{P_2(\ell)}\abs{P_3(\ell)}\abs{P_4(\ell)}
\]
is bounded above by \(\exp(C\ell^\beta)\), after increasing \(C\). This product bounds the \(\ell\)-ball with respect to \(T_n\), so \cref{thm: main estimate} follows.
\end{proof}

\section{Proof of~\cref{thm: main result}}\label{sec: conclusion}

The following statement is a direct consequence of the fact that the automorphism group of a free group of rank at least~2 is acylindrically hyperbolic, due to Genevois--Horbez~\cite[Corollary~1.2]{GenevoisHorbez2020}, and the common quotient theorem of Minasyan--Osin~\cite[Theorem~1.1]{MinasyanOsin2018}. The latter says that every countable family of countable acylindrically hyperbolic groups has a common finitely generated acylindrically hyperbolic quotient.

\begin{theorem}\label{thm: common quotient}
The automorphism groups of free groups $\Aut(F_n)$, $n\ge 2$, have a common finitely generated acylindrically hyperbolic quotient. 
\end{theorem}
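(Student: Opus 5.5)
The plan is to combine the two cited results directly. For the first ingredient I will invoke Genevois--Horbez~\cite[Corollary~1.2]{GenevoisHorbez2020}, which says that $\Aut(F_n)$ is acylindrically hyperbolic for every $n\ge 2$. This gives a countable family $\{\Aut(F_n)\}_{n\ge 2}$ of acylindrically hyperbolic groups. Each member is finitely generated, for instance by \cref{thm:nielsen-generating-aut-free}, and is therefore countable.

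The second step is to apply the common quotient theorem of Minasyan--Osin~\cite[Theorem~1.1]{MinasyanOsin2018} to this family. It produces a finitely generated acylindrically hyperbolic group $Q$ together with surjections $\Aut(F_n)\twoheadrightarrow Q$ for all $n\ge 2$, which is exactly the assertion.

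The only point requiring care is whether the hypotheses match. Minasyan--Osin need a countable family of countable acylindrically hyperbolic groups, and both countability conditions were checked above. Acylindrical hyperbolicity in their sense excludes virtually cyclic groups, and $\Aut(F_n)$ for $n\ge2$ contains $F_2$ (via inner automorphisms), so it is not virtually cyclic. With these checks done, the statement follows immediately. I do not expect a genuine obstacle here; the real work lies in the cited theorems.
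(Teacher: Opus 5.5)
Your proposal is correct and is the same argument as the paper's. You use Genevois--Horbez to get acylindrical hyperbolicity of $\Aut(F_n)$ for all $n\ge 2$, then apply the Minasyan--Osin common quotient theorem to this countable family of countable groups. Your extra checks (countability via finite generation, and not being virtually cyclic) are harmless and already implicit in the cited hypotheses.
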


\begin{proof}[Proof of~\cref{thm: main result}]
By~\cref{thm: common quotient} there is a finitely generated acylindrically hyperbolic group \(Q\) and epimorphisms
\[
  q_n\colon \Aut(F_{4\cdot 2^n})\twoheadrightarrow Q
\]
for every $n\in\N$. The group \(Q\) is a Kazhdan group since it is a quotient of Kazhdan groups by the result of Kaluba-Kielak-Nowak~\cite[Theorem~1.1]{KalubaKielakNowak2021} or the earlier result of Kaluba--Nowak--Ozawa~\cite{KalubaNowakOzawa2018}. See also~\cref{rem: property T} below. 
Let \(U_n=q_n(T_n)\) be the image of the generating set \(T_n\) from~\cref{thm: main estimate}. Taking quotients cannot increase growth. Thus, by~\cref{thm: main estimate}, there are constants \(C>0\) and \(\beta\in (0,1)\) such that
\[
  \gamma_Q^{U_n}(\ell)
  \le
  \gamma_{\Aut(F_{4\cdot 2^n})}^{U_n}(\ell)
  \le
  \exp\bigl(C\ell^\beta\bigr)
  \qquad
  \text{for every }0\leq \ell\leq 2^{n/2}/3.
\]
Set \(\ell_n\defeq\lfloor 2^{n/2}\rfloor/3\). Then
\begin{align*}
  1\leq \omega(Q)
  &\leq \inf_{n\in\N}\omega(Q,U_n)\\
  &= \inf_{n\in\N}\inf_{\ell\geq 1}\gamma_Q^{U_n}(\ell)^{1/\ell}\\
  &\leq \inf_{n\in\N}\inf_{1\leq\ell\leq 2^{n/2}}\gamma_Q^{U_n}(\ell)^{1/\ell}\\
  &\leq \inf_{n\in\N}\inf_{1\leq\ell\leq 2^{n/2}}\exp\bigl(C\ell^{\beta-1}\bigr)\\
  &\leq \inf_{n\in\N}\exp\bigl(C\ell_n^{\beta-1}\bigr)
   =1.
\end{align*}
Thus \(\omega(Q)=1\).
On the other hand, it is well known that \(Q\) has exponential growth since an acylindrically hyperbolic group contains 
a non-abelian free group (coming from independent loxodromic elements)~\cite{Osin16}. 
So \(Q\) has non-uniform exponential growth.
\end{proof}

\begin{remark}\label{rem: property T}
To obtain the Kazhdan property for~$Q$ one does not have to appeal to results about the Kazhdan property for automorphism groups of free groups. One could simply take the common quotient of $Q$ as above and a Kazhdan hyperbolic group. 
\end{remark}

\bibliographystyle{alpha}
\IfFileExists{literature.bib}
  {\bibliography{literature.bib}}
  {\bibliography{../literature.bib}}

\end{document}